\newtheorem{remark}[theorem]{Remark}
\newtheorem{assumption}[theorem]{Assumption}
\newcommand{\be}{\begin{equation}}
\newcommand{\ee}{\end{equation}}
\newcommand{\bea}{\begin{eqnarray}}
\newcommand{\eea}{\end{eqnarray}}
\newcommand{\beas}{\begin{eqnarray*}}
\newcommand{\eeas}{\end{eqnarray*}}
\begin{document}

\title{An ensemble-Proper Orthogonal\\ Decomposition method for the\\  Nonstationary Navier-Stokes Equations\thanks{Supported by the US Air Force Office of Scientific Research grant FA9550-15-1-0001 and US Department of Energy Office of Science grants DE-SC0009324 and DE-SC0010678.}}

\author{
Max Gunzburger${}^\dagger$, Nan Jiang${}^\dagger$, and Michael Schneier\thanks{Department of Scientific Computing,  
Florida State University,
Tallahassee, FL 32306-4120
({\tt mgunzburger@fsu.edu}, {\tt njiang@fsu.edu}, {\tt mschneier89@gmail.com}).
}
}
\maketitle

\begin{abstract}
The definition of partial differential equation (PDE) models usually involves a set of parameters whose values may vary over a wide range. The solution of even a single set of parameter values may be quite expensive. In many cases, e.g., optimization, control, uncertainty quantification, and other settings, solutions are needed for many sets of parameter values. We consider the case of the time-dependent Navier-Stokes equations for which a recently developed ensemble-based method allows for the efficient determination of the multiple solutions corresponding to many parameter sets. The method uses the average of the multiple solutions at any time step to define a linear set of equations that determines the solutions at the next time step. To significantly further reduce the costs of determining multiple solutions of the Navier-Stokes equations, we incorporate a proper orthogonal decomposition (POD) reduced-order model into the ensemble-based method. The stability and convergence results for the ensemble-based method are extended to the ensemble-POD approach.  Numerical experiments are provided that illustrate the accuracy and efficiency of computations determined using the new approach.
\end{abstract}

\begin{keywords}
Ensemble methods, proper orthogonal decomposition, reduced-order models, Navier-Stokes equations.
\end{keywords}

\section{Introduction}
Computing an ensemble of solutions of fluid flow equations for a set of parameters or initial/boundary conditions for, e.g., quantifying uncertainty or sensitivity analyses or to make predictions, is a common procedure in many engineering and geophysical applications. One common problem faced in these calculations is the excessive cost in terms of both storage and computing time. 
Thanks to recent rapid advances in parallel computing as well as intensive research in ensemble-based data assimilation, it is now possible, in certain settings, to obtain reliable ensemble predictions using only a small set of realizations. 
Successful methods that are currently used to generate perturbations in initial conditions include the Bred-vector method, \cite{TK93}, the singular vector method, \cite{BP95}, and the ensemble transform Kalman filter, \cite{BEM01}. Despite all these efforts, the current level of available computing power is still insufficient to perform high-accuracy ensemble computations for applications that deal with large spatial scales such as numerical weather prediction. In such applications, spatial resolution is often sacrificed to reduce the total computational time. For these reasons the development of efficient methods that allow for fast calculation of flow ensembles at a sufficiently fine spatial resolution is of great practical interest and significance.

Only recently, a first step was taken in \cite{JL14,JL15} where a new algorithm was proposed for computing an ensemble of solutions of the time-dependent Navier-Stokes equations (NSE) with different initial condition and/or body forces. At each time step, the new method employs the same coefficient matrix for all ensemble members. This reduces the problem of solving multiple linear systems to solving one linear system with multiple right-hand sides. There have been many studies devoted to this type of linear algebra problem and efficient iterative methods have been developed to significantly save both storage and computing time, e.g., block CG \cite{FOP95}, block QMR \cite{FM97}, and block GMRES \cite{GS96}. Even for some direct methods, such as the simple LU factorization, one can save considerable computing cost. 

Because the main goal of the ensemble algorithm is computational efficiency, it is natural to consider using reduced-order modeling (ROM) techniques to further reduce the computational cost. Specifically, we consider the proper orthogonal decomposition (POD) method which has been extensively used in the engineering community since it was introduced in \cite{Lumley} to extract energetically coherent structures from turbulent velocity fields. POD provides an optimally ordered, orthonormal basis in the least-squares sense, for given sets of experimental or computational data. The reduced order model is then obtained by truncating the optimal basis. 

Research on POD and its application to the unsteady NSE has been and remains a highly active field. Recent works improving upon POD have dealt with the combination of Galerkin strategies with POD \cite{BCI13,CAMS13}, stabilization techniques \cite{ANR09,CIJS14,SK05}, and regularized/large eddy simulation POD models for turbulent flows \cite{ZIJT11,ZIJT12}.

In this paper, we study a Galerkin proper orthogonal decomposition (POD-G-ROM) based ensemble algorithm for approximating solutions of the NSE.  Accordingly, our aim in this paper is to develop and demonstrate a procedure for the rapid solution of multiple solutions of the NSE, requiring only the solution of one reduced linear system with multiple right-hand sides at each time step.

\subsection{Previous works on ensemble algorithms}

The ensemble method given in \cite{JL14} is first-order accurate in time and requires a CFL-like time step condition to ensure stability and convergence. Two ensemble eddy viscosity numerical regularizations are studied in \cite{JL15} to relax the time step restriction. These two methods utilized the available ensemble data to parametrize the eddy viscosity based on a direct calculation of the kinetic energy in fluctuations without further modeling. They both give the same parametrization for each ensemble member and thus preserve the efficiency of the ensemble algorithm. The extension of the ensemble method to higher-order accurate ensemble time discretization is nontrivial. For instance, the method is not extensible to the most commonly used Crank-Nicolson scheme. Making use of a special combination of a second-order in time backward difference formula and an explicit second-order Adams-Bashforth treatment of the nonlinear term, a second-order accurate in time ensemble method was developed in \cite{J15}.Another second-order ensemble method with improved accuracy is presented in \cite{J16}. The ensemble algorithm was further used in \cite{JKL15} to model turbulence. By analyzing the evolution of the model variance, it was proved that the proposed ensemble based turbulence model converges to statistical equilibrium, which is a desired property of turbulence models.

\section{Notation and preliminaries}
	
Let $\Omega \subset \mathbb{R}^{d}$, $d= 2,3$, denote an open regular domain with boundary $\partial\Omega$ and let $[0,T]$ denote a time interval. Consider $J$ Navier-Stokes equations on a bounded domain, each subject to the no-slip boundary condition, and driven by $J$ different initial conditions $u^{j,0}(x)$ and body force densities $f^{j}(x,t)$, i.e., for $j=1,\ldots,J$, we have
\begin{equation}\label{eq:NSE}
\left\{\begin{aligned}
u_{t}^j+u^{j}\cdot\nabla u^{j}-\nu\triangle u^{j}+\nabla p^{j}  &
=f^{j}(x,t)&\quad\forall x\in\Omega\times(0,T]\\
\nabla\cdot u^{j}  &  =0&\quad\forall x\in\Omega\times(0,T]\\
u^{j}  &  =0&\quad\forall x\in\partial\Omega\times(0,T]\\
u^{j}(x,0)  &  =u^{j,0}(x)&\quad\forall x\in\Omega,
\end{aligned}\right.
\end{equation}
where $\nu$ denotes the given constant kinematic viscosity of the fluid and $u^{j}(x,t)$ and $p^{j}(x,t)$ respectively denote the velocity and pressure of the fluid flow.

We denote by $\|\cdot\|$ and $(\cdot,\cdot)$ the $L^{2}(\Omega)$ norm and inner product, and denote by $\Vert \cdot \Vert_{L^p}$ and $\Vert \cdot \Vert_{W_p^k}$ the $L^p(\Omega)$ norms and the Sobolev $W_p^k(\Omega)$ norms respectively. The space $H^k(\Omega)$ is the Sobolev space $W_2^k(\Omega)$, equipped with norm $\Vert \cdot \Vert_k$. The space $H^{-1}(\Omega)$ denotes the dual space of bounded linear functionals defined on $H^{1}_{0}(\Omega)=\{v\in H^{1}(\Omega)\,:\,v=0 \mbox{ on } \partial\Omega\}$; this space is endowed with the norm
$$
\|f\|_{-1}=\sup_{0\neq v\in X}\frac{(f,v)}{\| \nabla v\| } 
\quad\forall f\in H^{-1}(\Omega).
$$

The solutions spaces $X$ for the velocity and $Q$ for the pressure are respectively defined as
$$
\begin{aligned}
X : =& [H^{1}_{0}(\Omega)]^{d} = \{ v \in [L^{2}(\Omega)]^{d} \,:\, \nabla v \in [L^{2}(\Omega)]^{d \times d} \ \text{and} \  v = 0 \ \text{on} \ \partial \Omega \} \\
Q : =& L^{2}_{0}(\Omega) = \Big\{ q \in L^{2}(\Omega) \,:\, \int_{\Omega} q dx = 0 \Big\}.
\end{aligned}
$$

Let $\Vert \cdot \Vert_0$ denote the usual $L^2$ norm. For a function $v(x,t)$ that is well defined on $\Omega \times [0,T]$ 
we define the norms
$$
\||v|\|_{2,s} : = \Big(\int_{0}^{T}\|v(\cdot,t)\|_{s}^{2}dt\Big)^{\frac{1}{2}}
\qquad \text{and} \qquad 
\||v|\|_{\infty,s} := \text{ess\,sup}_{[0,T]}\|v(\cdot,t)\|_{s} .
$$
The subspace of $X$ consisting of weakly divergence free functions is defined as
$$
V :=\{v\in X \,:\,(\nabla\cdot v,q)=0\,\,\forall q\in Q\} \subset X.
$$

A weak formulation of (\ref{eq:NSE}) is given as follows: for $j=1, \ldots, J$, find $u^j:(0,T]\rightarrow X$ and $p^j:(0,T]\rightarrow Q$ that, for almost all $t\in(0,T]$, satisfy 
\begin{equation}\label{wfwf}
\left\{\begin{aligned}
(u_{t}^j,v)+(u^{j}\cdot\nabla u^{j},v)+\nu(\nabla u^{j},\nabla v)-(p^{j}
,\nabla\cdot v)  &  =(f^{j},v)&\quad\forall v\in X\\
(\nabla\cdot u^{j},q)  &  =0&\quad\forall q\in Q\\
u^{j}(x,0)&=u^{j,0}(x).&
\end{aligned}\right.
\end{equation}

Conforming velocity and pressure finite element spaces based on a regular triangulation of $\Omega$ having maximum triangle diameter $h$ are respectively denoted by
$$
X_{h}\subset X\qquad\mbox{and}\qquad Q_{h}\subset Q.
$$
We assume that the pair of spaces $(X_h,Q_h)$ satisfy the discrete inf-sup (or $LBB_h$) condition required for stability of finite element approximation; we also assume that the finite element spaces satisfy the approximation properties
$$
\begin{aligned}
\inf_{v_h\in X_h}\| v- v_h \|&\leq C h^{s+1}&\forall v\in [H^{s+1}(\Omega)]^d\\
\inf_{v_h\in X_h}\| \nabla ( v- v_h )\|&\leq C h^s&\forall v\in [H^{s+1}(\Omega)]^d\\
\inf_{q_h\in Q_h}\|  q- q_h \|&\leq C h^s&\forall q\in H^{s}(\Omega)
\end{aligned}
$$
for a constant $C>0$ having value independent of $h$. The total number of finite element degrees of freedom is given by $\dim X_h + \dim Q_h$. A concrete example for which the $LBB_h$ stability condition approximation estimates are known to hold is the family of Taylor-Hood $P^s$-$P^{s-1}$, $s\geq 2$, element pairs \cite{GR79, Max89}. For the most commonly used $s=2$ Taylor-Hood element pair based on a tetrahedral grid, $\dim X_h + \dim Q_h$ is roughly equal to three times the number of vertices plus twice the number of edges.

Further, in this paper we will need to solve the NSE \eqref{eq:NSE} using a second order time stepping scheme (e.g., Crank Nicolson). We will assume the FE approximations satisfy the following error estimates:
\begin{gather}
\| u- u_h \|\leq C (h^{s+1}+\Delta t^{2} )\\
\| \nabla ( u- u_h ) \|\leq C (h^s+\Delta t^{2} ).
\end{gather}

The subspace of $X_{h}$ consisting of discretely divergence free functions is defined as
$$
V_{h} :=\{v_{h}\in X_{h}\,:\,(\nabla\cdot v_{h},q_{h})=0\,\,\forall
q_{h}\in Q_{h}\}  \subset X.
$$
Note that in most cases, and for the Taylor-Hood element pair in particular, $V_{h} \not\subset V$, i.e., discretly divergence free functions are not divergence free.

As is common to do, we define the explicitly skew-symmetric trilinear form introduced by Temam given by 
$$
b^{\ast}(w,u,v):=\frac{1}{2}(w\cdot\nabla u,v)-\frac{1}{2}(w\cdot\nabla v,u)
\qquad\forall u,v,w\in [H^1(\Omega)]^d.
$$
This form satisfies the bounds, \cite{Layton08}
\begin{gather}
b^{\ast}(w,u,v)\leq C \|  \nabla w\|   \| \nabla u\| (\|  v \|  \| \nabla
v \| )^{1/2}\qquad\forall u, v, w \in X, \label{In1}\\
b^{\ast}(w,u,v)\leq C (\| w \| \|  \nabla w\| )^{1/2}  \| \nabla u\|  \| \nabla
v \| \qquad\forall u, v, w \in X .\label{In2}
\end{gather}
Moreover, we have that $b^{\ast}(u,u,v)= (u\cdot\nabla u,v)$ for all $u\in V , v\in X$ so that we may replace the nonlinear term $(u^{j}\cdot\nabla u^{j},v)$ in \eqref{wfwf} by $b^{\ast}(u^j,u^j,v)$. The advantage garnered through the use of $b^{\ast}(w,u,v)$ compared to $(w\cdot\nabla u,v)$ is that $b^{\ast}(w,u,u)=0$ for all $u,w\in X$ whereas $(w\cdot\nabla u,v)=0$ only if $w\in V$.

\begin{definition}\label{def21}
Let $t^{n}=n\Delta t$, $n=0,1,2,\ldots,N$, where $N:=T/\Delta t$, denote a partition of the interval $[0,T]$. For $j=1, \ldots, J$ and $n=0,1,2,\ldots,N$, let $u^{j,n}(x):=u^{j}(x,t^{n})$. Then, the \text{\bf ensemble mean} is defined, for $n=0,1,2,\ldots,N$, by
$$
 <u>^n : =\frac{1}{J}\sum_{j=1}^{J}u^{j,n}.\label{Enmean}
$$
\end{definition}

For $j=1,\ldots,J$, let $u_{h}^{j,0}(x)\in X_h$ denote approximations, e.g., interpolants or projections, of the initial conditions $u^{j,0}(x)$. Then, the full space-time discretization of (\ref{eq:NSE}), or more precisely of \eqref{wfwf}, we consider is given as follows: {\em given, for $j=1,\ldots,J$, $u_{h}^{j,0}(x)\in X_h$ and $f^j(x,t)\in[H^{-1}(\Omega)]^d$ for almost every $t\in(0,T]$, find, for $n=0,1,\ldots,N-1$ and for $j=1,\ldots,J$, $u_{h}^{j,n+1}(x)\in X_{h}$ and $p_{h}^{j,n+1}(x)\in Q_{h}$ satisfying}
\begin{equation}\label{En-full-FE}\emph{}
\left\{\begin{aligned}
\big(\frac{u_{h}^{j, n+1}-u_{h}^{j,n}}{\Delta t},v_{h}\big)+b^{\ast}(<u_{h}
>^{n},u_{h}^{j, n+1},v_{h})+&b^{\ast}(u_{h}^{j,n}-<u_{h}>^{n},u_{h}^{j,n}
,v_{h})\\
-(p_{h}^{j, n+1},\nabla\cdot v_{h})+\nu(\nabla u_{h}^{j,n+1},\nabla
v_{h})&=(f^{j,n+1},v_{h}) \qquad\forall v_{h}\in X_{h}
\\
(\nabla\cdot u_{h}^{j,n+1},q_{h})&=0\qquad\qquad\qquad\forall q_{h}\in Q_{h}.
\end{aligned}\right.
\end{equation}
We refer to this discretization as {\em En-full-FE} indicating that we are referring to an ensemble-based discretization of \eqref{wfwf} using a high-dimensional finite element space. This ensemble-based discretization of the NSE is noteworthy because the system \eqref{En-full-FE} is not only {\em linear} in the unknown functions $u_{h}^{j,n+1}(x)$ and $p_{h}^{j,n+1}(x)$, but because of the use of ensembles, we also have that the coefficient matrix associated with \eqref{En-full-FE} is independent of $j$, i.e., at each time step, all members of the ensemble can be determined from $J$ linear algebraic systems all of which have the same coefficient matrix. On the other hand, the linear system can be very large because in practice $\dim X_h+\dim Q_h$ can be very large. This observation, in fact, motivates interest in building reduced-order discretizations of the NSE.  

Because $X_{h}$ and $Q_{h}$ are assumed to satisfy the $LBB_{h}$ condition, \eqref{En-full-FE} can be more compactly expressed as follows: {\em given, for $j=1,\ldots,J$, $u_{h}^{j,0}(x)\in X_h$ and $f^j(x,t)\in[H^{-1}(\Omega)]^d$ for almost every $t\in(0,T]$, find, for $n=0,1,\ldots,N-1$ and for $j=1,\ldots,J$,  $u_{h}^{j, n+1}(x)\in V_{h}$ satisfying}
\begin{equation}\label{eq: conv}
\begin{aligned}
\big(\frac{u_{h}^{j, n+1}-u_{h}^{j,n}}{\Delta t},v_{h}\big)+b^{\ast}(&<u_{h}
>^{n},u_{h}^{j,n+1},v_{h})+b^{\ast}(u_{h}^{j,n}-<u_{h}>^{n},u_{h}
^{j,n},v_{h})\\
&+\nu(\nabla u_{h}^{j,n+1},\nabla v_{h})=(f^{j,n+1},v_{h})
\qquad\forall v_{h}\in V_{h}.
\end{aligned}
\end{equation}
Note that in general it is a difficult matter to construct a basis for the space $V_h$ so that in practice, one still works with \eqref{En-full-FE}. We introduce the reduced system \eqref{eq: conv} so as to facilitate the analyses given in later sections.

\section{Proper orthogonal decomposition (POD) reduced-order modeling}
\label{PODsec}

The POD model reduction scheme can be split into two main stages: an offline portion and an online portion. In the offline portion, one collects into what is known as a snapshot set the solution of a partial differential equation (PDE), or more precisely, of a discrete approximation to that solution, for a number of different input functions and/or evaluated at several time instants. The snapshot set is hopefully generated in such a way that it is representative of the behavior of the exact solution. The snapshot set is then used to generate a POD basis, hopefully of much smaller cardinality compared to that of the full finite element space, that provides a good approximation to the data present in the snapshot set itself. In the online stage, the POD basis is used to generate approximate solutions of the PDE for other input functions; ideally these will be accurate approximations achieved much more cheaply compared to the use of a standard method such as a standard finite element method. 

In the rest of this section, we delve into further detail about the generation of the snapshot set, the construction of the POD basis in a finite element setting, and how the POD basis can be used to construct a reduced-order model for the NSE in the ensemble framework. This section will focus on the framework specific to this paper; for more detailed presentations about POD, see, e.g., \cite{CGS12,GPS07,HLB96,V01}.

\subsection{Snapshot set generation}\label{snapshot}

The offline portion of the algorithm begins with the construction of the snapshot set which consists of the solution of the PDE for a number of different input functions and/or evaluated at several different time instants. Given a positive integer $N_S$, let $0=t_0<t_1< \cdots < t_{N_S} = T$ denote a uniform partition of the time interval $[0,T]$. Note that this partition is usually much coarser than the partition of $[0,T]$ into $N$ intervals, introduced in Definition \ref{def21}, which is used to discretize the PDE, i.e., we have $N_S\ll N$. We first define the set of snapshots corresponding to exact solutions of the weak form of the NSE \eqref{wfwf}. For $j=1,\ldots,J_S$, we select $J_S$ different initial conditions $u^{j,0}(x)$ 
and denote by $u_S^{j,m}(x)\in X$ the exact velocity field satisfying \eqref{wfwf}, evaluated at $t=t_m$, $m=1,\ldots,N_S$, which corresponds to the initial condition $u^{j,0}(x)$. Then, the space spanned by the $J_S(N_S+1)$ so obtained snapshots is defined as
\begin{equation}\label{SnapSpace}
X_S:=\text{span} \{  u_S^{j,m}(x) \}_{j=1,m=0}^{J_S,N_S} \subset X.
\end{equation}
In the same manner, we can construct a set of snapshots $u_{h,S}^{j,m}(x)\in X_h$, $j=1,\ldots,J_S$, $m=0, 1,\ldots,N_S$, of finite element approximations of the velocity solution determined from a standard finite element discretization of \eqref{wfwf}. Note that one could also determine, at lesser cost but with some loss of accuracy, the snapshots from the ensemble-based discretization \eqref{En-full-FE}. We can then also define the space spanned by the $J_S (N_S+1)$ discrete snapshots as
\begin{equation}\label{SnapSpaceh}
X_{h,S}:=\text{span} \{  u_{h,S}^{j,m}(x) \}_{j=1,m=0}^{J_S,N_S} \subset V_h \subset X_h.
\end{equation}
Note that $S=\dim X_{h,S} \le J_S(N_S+1)$. The snapshots are finite element solutions so the span of the snapshots is a subset of the finite element space $X_h$. Additionally, it is important to note that by construction, the snapshots satisfy the discrete continuity equation so that the span of the snapshots is indeed a subspace of the discretly divergence free subspace $V_h\subset X_h$.

If we denote by $\vec{u}_S^{j,m}$ the vector of coefficients corresponding to the finite element function $u_{h,S}^{j,m}(x)$. With $K=\dim X_h$, we may also define the $K\times J_S(N_S+1)$ {\em snapshot matrix} $\mathbb{A}$ as
$$
\mathbb{A} = \big(\vec{u}_S^{1,0},\vec{u}_S^{1,1},  \ldots , \vec{u}_S^{1,N_S},  \vec{u}_S^{2,0},\vec{u}_S^{2,1},  \ldots , \vec{u}_S^{2,N_S}, \ldots , \vec{u}_S^{J_S,0}, \vec{u}_S^{J_S,1},  \ldots , \vec{u}_S^{J_S,N_S}\big),
$$
i.e., the columns of $\mathbb{A}$ are the finite element coefficient vectors of the discrete snapshots.

To construct a reduced basis that results in accurate approximations, the snapshot set must contain sufficient information about the dynamics of the solution of the PDE. In our context, this requires one to not only take a sufficient number of snapshots with respect to time, but also to select a set of initial conditions that generate a set of solutions that is representative of the possible dynamics one may encounter when using other initial conditions. In the POD framework for the NSE, the literature on selecting this set is limited. One of the few algorithms which has been explored in the ensemble framework is the previously mentioned Bred-vectors algorithm given in \cite{TK93}. Further exploration of this and other approaches for the selection of initial conditions is a subject for future research.

\subsection{Construction of the POD basis}

Using the set of discrete snapshots, we next construct the POD basis $\{{\varphi}_i(x)\}_{i=1}^R$. We define the POD function space $X_R$ as
$$
X_R :=\text{span}\{{\varphi}_i\}_{i=1}^R \subset X_{h,S} \subset V_h\subset X_h.
$$
There are a number of equivalent ways in which one may characterize the problem of determining $X_R$; for a full discussion see \cite[Section 2]{BGH}. For example, the POD basis construction problem can be defined as follows: determine an orthonormal basis $\{\varphi_i\}_{i=1}^S$ for $X_{h,S}$ such that for all $R\in \{1,\ldots,S\}$, $\{\varphi_i\}_{i=1}^R$ solves the following constrained minimization problem
\begin{equation}\label{Min}
\begin{aligned}
\min  
\sum_{k=1}^{J_S} \sum_{l=0}^{N_S}\Big \|  u_{h,s}^{k,l}-\sum_{j=1}^R (u_{h,s}^{k,l}, \varphi_j)\varphi_j\Big \| ^2 \\
\text{subject to } (\varphi_i, \varphi_j)= \delta_{ij}\quad\mbox{for $i,j=1,\ldots,R$},
\end{aligned}
\end{equation}
where $\delta_{ij}$ is the Kronecker delta and the minimization is with respect to all orthonormal bases for $X_{h,S}$. We note that by defining our basis in this manner we elect to view the snapshots  
as finite element functions as opposed to finite element coefficient vectors.   

Define the $J_S(N_S+1)\times J_S(N_S+1)$ correlation matrix $\mathbb{C} = \mathbb{A}^{T}\mathbb{M}\mathbb{A}$, where $\mathbb{M}$ denotes the Gram matrix corresponding to full finite element space. Then, the problem \eqref{Min} is equivalent to determine the $R$ dominant eigenpairs $\{\lambda_i,\vec{a}_i\}$ satifying
\begin{equation}
\label{eigProb}
\mathbb{C}\vec{a}_{i} = \lambda_{i}\vec{a}_{i}, \ \ \ |\vec{a}_{i}| = 1, \ \ \ \vec{a}^{T}_{i}\vec{a}_{j} = 0 \ \text{if} \ i \neq j, \ \text{and} \ \lambda_{i} \geq \lambda_{i-1} > 0,
\end{equation}
where $|\cdot|$ denotes the Euclidean norm of a vector. The finite element coefficient vectors corresponding to the POD basis functions are then given by
\begin{gather}
\vec\varphi_i = \frac{1}{\sqrt{\lambda_i}}\mathbb{A}\vec{a}_{i}, \ \ \ i = 1, \ldots, R.
\end{gather}
Alternatively, we can let $\mathbb{M} = \mathbb{S}^{T}\mathbb{S}$, and define $\widetilde{\mathbb{A}} = \mathbb{S}\mathbb{A}$ so that $\mathbb{C} = \mathbb{A}^{T}\mathbb{M}\mathbb{A} = \widetilde{\mathbb{A}}^{T}\widetilde{\mathbb{A}}$ and then determine the singular value decomposition of the modified snapshot matrix $\widetilde{\mathbb{A}}$; the vectors $\vec{a}_{i}$, $i = 1, \ldots, R$ are then given as the first $R$ left singular vectors of $\widetilde{\mathbb{A}}$ which correspond to the first $R$ singular values $\sigma_i=\sqrt{\lambda_i}$.

\subsection{POD reduced-order modeling}  

We next illustrate how a POD basis is used to construct a reduced-order model for the NSE within the ensemble framework. 
The discretized system that defines the POD approximation mimics that for the full finite element approximation, except that now we seek an approximation in the POD space $X_R$ having the basis $\{{\varphi}_i\}_{i=1}^R$. Specifically, for $j=1,\ldots,J$, we define the POD approximate initial conditions as $u_{R}^{j,0}(x)=\sum_{i=1}^R (u^{j,0}, {\varphi}_i)\varphi_i(x)\in X_R$ and then pose the following problem: {\em given $u_{R}^{j,0}(x)\in X_R$, for $n=0,1,\ldots,N-1$ and for $j=1,\ldots,J$, find $u_{R}^{j,n+1}\in X_R$ satisfying}
\begin{equation}\label{En-POD-Weak}
\begin{aligned}
\big(\frac{u_{R}^{j,n+1}-u_{R}^{j,n}}{\Delta t}, \varphi\big)+&b^{\ast}(<u_{R}
>^{n},u_{R}^{j,n+1},\varphi)+b^{\ast}(u_{R}^{j, n}-<u_{R}
>^{n},u_{R}^{j,n}
,\varphi)\\&
+\nu(\nabla u_{R}^{j,n+1},\nabla
\varphi)=(f^{j,n+1},\varphi)\qquad\forall \varphi\in X_{R}.
\end{aligned}
\end{equation}
We refer to this discretization as {\em En-POD} indicating that we are referring to an
ensemble-based discretization of (2.2) using a low-dimensional POD space. Note that because $X_R\subset V_h$, i.e., the POD approximation is by construction discretely divergence free, the pressure term in the POD-discretized NSE \eqref{En-POD-Weak} drops out and we are left with a system involving only the POD approximation to the velocity.  One further point of emphasis is that the $J$ initial conditions used in \eqref{En-POD-Weak} are different from the $J_S$ initial conditions used to construct the snapshot set, i.e., we use $J_S$ initial conditions to solve the full finite element system \eqref{En-full-FE} to determine the snapshots, and now solve $J$ additional approximations of the NSE by solving the much smaller POD system \eqref{En-POD-Weak}. 

As was the case for \eqref{En-full-FE}, the POD system \eqref{En-POD-Weak} is linear in the unknown $u_{R}^{j,n+1}$ and the associated coefficient matrix does not depend on $j$, i.e., it is the same for all realizations of the initial condition. On the other hand, \eqref{En-POD-Weak} is a system of $R$ equations in $R$ unknowns whereas \eqref{En-full-FE} involves $\dim X_h+\dim Q_h$ equations in the same number of unknowns, where $R$ and $\dim X_h+\dim Q_h$ denote the total number of POD and finite element degrees of freedom, respectively. Thus, if $R\ll \dim X_h+\dim Q_h$, {\em solving} \eqref{En-POD-Weak} requires much less cost compared to solving \eqref{En-full-FE}. In this way the offline cost of constructing the POD basis can be amortized over many online solves using the much smaller POD system. We address the {\em assembly} costs related to \eqref{En-POD-Weak} in Section \ref{numexp}.

\section{Stability analysis of En-POD}

We prove the conditional, nonlinear, long-time stability of solutions of (\ref{En-POD-Weak}).

The $L^2(\Omega)$ projection operator $\Pi_R$: $L^2(\Omega) \rightarrow X_R$ is defined by
\begin{equation}\label{eq:def_proj}
 (u-\Pi_R u , \varphi)=0\qquad \forall \varphi \in X_R.
\end{equation}
Denote by $\|\hspace{-1pt}|  \cdot \|\hspace{-1pt}|_2 $ the spectral norm for symmetric matrices and let ${\mathbb M}_R$ denote the $R\times R$ POD mass matrix with entries $[{\mathbb M}_R]_{i,i'}= (\varphi_i, \varphi_i')$ and ${\mathbb S}_R$ denote the $R\times R$ matrix with entries $[{\mathbb S}_R]_{i,i'}=[{\mathbb M}_R]_{i,i'}+\nu(\nabla \varphi_i, \nabla \varphi_{i'})$, $i,i'=1,\ldots,R$. It is shown in \cite{KV01} that
\begin{equation}\label{lm:inverse}
\|  \nabla \varphi \|  \leq \big(\|\hspace{-1pt}|  {\mathbb S}_R \|\hspace{-1pt}| _2 \|\hspace{-1pt}|  {\mathbb M}_R^{-1} \|\hspace{-1pt}| _2\big)^{1/2} \| \varphi \| \qquad \forall \varphi \in X_R.
\end{equation}
As $X_R\subset X_h$, we have the following lemma, see of \cite[page 276]{JL14} for proof.
\begin{lemma}\label{lm:skew} 
For any $u_{R}, v_{R}, w_{R} \in X_{R}$,
\begin{align*}
b^{\ast}(u_{R},v_{R},w_{R})=\int_{\Omega}u_{R}\cdot\nabla v_{R} \cdot w_{R}
\text{ }dx + \frac{1}{2}\int_{\Omega}(\nabla\cdot u_{R})(v_{R}\cdot w_{R}
)\text{ }dx.
\end{align*}
\end{lemma}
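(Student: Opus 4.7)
The plan is to start from the definition
$$b^{\ast}(u_R,v_R,w_R) = \tfrac{1}{2}(u_R\cdot\nabla v_R, w_R) - \tfrac{1}{2}(u_R\cdot\nabla w_R, v_R)$$
and rewrite the second term via integration by parts. Since $X_R \subset V_h \subset X_h \subset X = [H^1_0(\Omega)]^d$, every element of $X_R$ vanishes on $\partial\Omega$ in the trace sense, so the boundary contributions drop out. This reduces the lemma to a direct calculation with Cartesian components.

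Writing the second term in components, I would use
$$(u_R\cdot\nabla w_R, v_R) = \int_\Omega (u_R)_i\,\partial_i (w_R)_j\, (v_R)_j\,dx,$$
and then integrate by parts in the $i$th variable. The boundary term is zero because $v_R|_{\partial\Omega}=0$, and the product rule yields two bulk terms:
$$(u_R\cdot\nabla w_R, v_R) = -\int_\Omega (\nabla\cdot u_R)(v_R\cdot w_R)\,dx - \int_\Omega (u_R\cdot\nabla v_R)\cdot w_R\,dx.$$
Substituting this back into the definition of $b^{\ast}$ and combining the two copies of $(u_R\cdot\nabla v_R,w_R)$ gives exactly the claimed identity.

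The only subtlety worth flagging is that we may not use $\nabla\cdot u_R = 0$ to shortcut the argument: although $X_R\subset V_h$, the POD functions are only \emph{discretely} divergence-free (as explicitly noted earlier in the excerpt, $V_h\not\subset V$ for Taylor--Hood), so the $\tfrac{1}{2}\int(\nabla\cdot u_R)(v_R\cdot w_R)\,dx$ term genuinely survives and is the whole point of the lemma. No inf-sup or inverse estimates are invoked; the result is a pointwise vector-calculus identity that relies solely on the homogeneous Dirichlet boundary conditions carried by the POD space through the inclusion $X_R\subset X$.
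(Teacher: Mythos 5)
Your proof is correct and is essentially the argument the paper relies on: the paper simply cites \cite[page 276]{JL14}, where the identity is obtained by exactly this componentwise integration by parts of the term $(u_R\cdot\nabla w_R, v_R)$, using the homogeneous Dirichlet boundary condition inherited from $X_R\subset X=[H^1_0(\Omega)]^d$ to kill the boundary term. Your remark that the $\frac{1}{2}\int_\Omega(\nabla\cdot u_R)(v_R\cdot w_R)\,dx$ term cannot be discarded because $X_R\subset V_h\not\subset V$ is also exactly the right point to flag.
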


\begin{theorem}\label{th:En-POD}
{\rm[Stability of En-POD]} For $n=0,\ldots,N-1$ and $j=1,\ldots,J$, let $u_{R}^{j,n+1}$ satisfy
\eqref{En-POD-Weak}. Suppose the time-step condition
\begin{equation}
\big(C{\nu}^{-1} \|\hspace{-1pt}|  {\mathbb S}_R \|\hspace{-1pt}| _2^{1/2}\| \nabla (u_{R}^{j,n}-<u_R>^n)\| ^{2}
\big)\Delta t
\leq1\qquad \mbox{for $j=1,\ldots,J$} \label{ineq:CFL-h}
\end{equation}
holds. Then, for
$n=1,\ldots,N$,
\begin{equation}\label{Stab:result}
\begin{aligned}
\frac{1}{2}\|&u_{R}^{j,n}\|^{2}+\frac{1}{4}\sum_{n'=0}^{n-1}\|u_{R}
^{j, n'+1}-u_{R}^{j, n'}\|^{2}+\frac{\nu\Delta t}{4}\|\nabla u_{R}^{j,n}\|^{2}
+\frac{\nu\Delta t}{4} \sum_{n'=0}^{n-1}\|\nabla u_{R}^{j, n'+1}\|^{2}\\
&\leq\sum_{n'=0}^{n-1}\frac{\Delta t}{2\nu}\|f^{j,n'+1}\|_{-1}^{2}+ \frac{1}{2}\|u_{R}^{j,0}\|^{2}+\frac{\nu\Delta t}{4}\|\nabla u_{R}^{j,0}\|^{2}\qquad \mbox{for $j=1,\ldots,J$} .
\end{aligned}
\end{equation}
\end{theorem}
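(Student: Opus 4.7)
The plan is to derive a per-step energy identity for each ensemble member $j$ by testing \eqref{En-POD-Weak} with $\varphi=u_R^{j,n+1}$, then to control the two nonlinear contributions so that the dangerous terms can be absorbed on the left-hand side under the CFL condition \eqref{ineq:CFL-h}. The polarization identity converts the discrete time derivative into $\tfrac{1}{2\Delta t}(\|u_R^{j,n+1}\|^2-\|u_R^{j,n}\|^2+\|u_R^{j,n+1}-u_R^{j,n}\|^2)$, the viscous term becomes $\nu\|\nabla u_R^{j,n+1}\|^2$, and the ``centered'' trilinear term $b^{\ast}(<u_R>^n,u_R^{j,n+1},u_R^{j,n+1})$ vanishes directly from the skew-symmetric construction of $b^{\ast}$.

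The real work is the remaining cross term $b^{\ast}(u_R^{j,n}-<u_R>^n,u_R^{j,n},u_R^{j,n+1})$. Using the antisymmetry $b^{\ast}(a,b,c)=-b^{\ast}(a,c,b)$ together with $b^{\ast}(a,v,v)=0$, I would first rewrite it as
$$
b^{\ast}(u_R^{j,n}-<u_R>^n,\,u_R^{j,n+1},\,u_R^{j,n+1}-u_R^{j,n}).
$$
Applying \eqref{In1} with $w=u_R^{j,n}-<u_R>^n$, $u=u_R^{j,n+1}$, and $v=u_R^{j,n+1}-u_R^{j,n}$ produces $\|\nabla(u_R^{j,n+1}-u_R^{j,n})\|^{1/2}$ as the only problematic factor, which is removed via the POD inverse inequality \eqref{lm:inverse} at the cost of a $\|\hspace{-1pt}|{\mathbb S}_R\|\hspace{-1pt}|_2^{1/4}$. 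A single application of Young's inequality $ab\le (4\Delta t)^{-1}a^2+\Delta t\, b^2$ then splits the bound into $\tfrac{1}{4}\|u_R^{j,n+1}-u_R^{j,n}\|^2$, which is absorbed by the polarization term, plus $C(\Delta t)^2\|\hspace{-1pt}|{\mathbb S}_R\|\hspace{-1pt}|_2^{1/2}\|\nabla(u_R^{j,n}-<u_R>^n)\|^2\|\nabla u_R^{j,n+1}\|^2$, which is dominated by $\tfrac{\nu\Delta t}{4}\|\nabla u_R^{j,n+1}\|^2$ precisely when \eqref{ineq:CFL-h} holds.

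For the forcing I use duality plus Young to obtain $\Delta t(f^{j,n+1},u_R^{j,n+1})\le \tfrac{\Delta t}{2\nu}\|f^{j,n+1}\|_{-1}^2+\tfrac{\nu\Delta t}{2}\|\nabla u_R^{j,n+1}\|^2$, and the gradient piece is absorbed into the leftover dissipation on the LHS. Summing the resulting per-step estimate from $n'=0$ to $n-1$ telescopes $\tfrac{1}{2}\|u_R^{j,n'+1}\|^2-\tfrac{1}{2}\|u_R^{j,n'}\|^2$ into $\tfrac{1}{2}(\|u_R^{j,n}\|^2-\|u_R^{j,0}\|^2)$ and yields \eqref{Stab:result}; the symmetric $\tfrac{\nu\Delta t}{4}\|\nabla u_R^{j,n}\|^2$ term on the LHS (and the matching initial term on the RHS) is kept by carrying along half of the final-step dissipation rather than using all of it for absorption. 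The main obstacle, and the reason one must choose the $b^{\ast}$-arguments as above, is the exact matching of exponents in \eqref{ineq:CFL-h}: placing $u_R^{j,n+1}$ (rather than $u_R^{j,n+1}-u_R^{j,n}$) in the middle slot of \eqref{In1} leaves only a half-power of $\nabla$ to be converted via \eqref{lm:inverse}, which after squaring through Young delivers exactly the $\|\hspace{-1pt}|{\mathbb S}_R\|\hspace{-1pt}|_2^{1/2}$ that appears in the stated time-step restriction; any other grouping (or the use of \eqref{In2}) yields a different, strictly stronger CFL.
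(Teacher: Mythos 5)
Your proof is correct and is, in substance, the paper's energy argument: same test function $\varphi=u_R^{j,n+1}$, same polarization identity, same use of \eqref{In1}, the POD inverse inequality \eqref{lm:inverse} with $\|\hspace{-1pt}|{\mathbb M}_R^{-1}\|\hspace{-1pt}|_2=1$, and Young's inequality to split off $\tfrac14\|u_R^{j,n+1}-u_R^{j,n}\|^2$, leading to the identical CFL condition \eqref{ineq:CFL-h}. The one genuine difference is the algebraic grouping of the cross trilinear term: you move the difference quotient into the \emph{third} slot while promoting the middle slot to the new time level, obtaining $b^{\ast}(u_R^{j,n}-<u_R>^n,u_R^{j,n+1},u_R^{j,n+1}-u_R^{j,n})$, so that after Young the dangerous term carries $\|\nabla u_R^{j,n+1}\|^2$ and is absorbed directly into the implicit dissipation already sitting on the left. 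The paper instead keeps $u_R^{j,n}$ in the middle slot, so its dangerous term carries $\|\nabla u_R^{j,n}\|^2$ at the \emph{old} level; this forces the add-and-subtract of $\tfrac{\nu\Delta t}{4}\|\nabla u_R^{j,n}\|^2$, which creates the telescoping gradient term and is precisely what produces the standalone $\tfrac{\nu\Delta t}{4}\|\nabla u_R^{j,n}\|^2$ on the left of \eqref{Stab:result} together with the matching $\tfrac{\nu\Delta t}{4}\|\nabla u_R^{j,0}\|^2$ on the right. Your version sidesteps that trick at the cost of delivering a (cosmetically) slightly different final inequality: per step you have only $\tfrac{\nu\Delta t}{2}\|\nabla u_R^{j,n+1}\|^2$ available after the forcing term, of which $\tfrac{\nu\Delta t}{4}$ is consumed by absorption, so summing leaves the coefficient of the final-level gradient at $\tfrac{\nu\Delta t}{4}$ rather than the $\tfrac{\nu\Delta t}{2}$ implicit in \eqref{Stab:result}; your remark about ``carrying along half of the final-step dissipation'' cannot quite manufacture the missing quarter. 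This is immaterial for stability, and your bound is exactly what is needed later in the error analysis. One small inaccuracy in your closing discussion: it is not true that every other grouping worsens the CFL --- the paper's grouping, with the difference in the third slot and $u_R^{j,n}$ in the middle, yields the same $\|\hspace{-1pt}|{\mathbb S}_R\|\hspace{-1pt}|_2^{1/2}$ scaling; only groupings that put a full (rather than half) power of $\|\nabla(u_R^{j,n+1}-u_R^{j,n})\|$ through \eqref{lm:inverse} degrade the condition.
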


\begin{proof}
The proof is provided in Appendix \ref{app1}.
\end{proof}

\begin{remark}
In the time-step condition, the constant C is dependent on the shape of the domain and the mesh as a result of the use of inverse inequality in the proof. For a fixed mesh on a fixed domain, C is a generic constant that is independent of the time step $\Delta t$, the solution $u^j$ and viscosity $\nu$.
\end{remark}

\section{Error analysis of En-POD}

We next provide an error analysis for En-POD solutions.

\begin{lemma} \label{lm:L2err} 
{\rm[$L^2(\Omega)$ norm of the error between snapshots and their projections onto the POD space]}  We have
\begin{equation}\label{errL2_1}
\frac{1}{J_S(N_S+1)} \sum_{j=1}^{J_S} \sum_{m=0}^{N_S}\Big \|  u_{h,S}^{j,m}-\sum_{i=1}^R (u_{h,S}^{j,m}, \varphi_i)\varphi_i\Big \| ^2 = \sum_{i=R+1}^{J_S(N_S+1)} \lambda_i 
\end{equation}
and thus for $j=1,\ldots,J_S$,
\begin{equation}\label{errL2_2}
\frac{1}{N_S+1} \sum_{m=0}^{N_S}\Big \|  u_{h,S}^{j,m}-\sum_{i=1}^R (u_{h,S}^{j,m}, \varphi_i)\varphi_i\Big \| ^2 \leq J_S\sum_{i=R+1}^{J_S(N_S+1)} \lambda_i .
\end{equation}
\end{lemma}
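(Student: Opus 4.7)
The plan is to recognize this as the standard POD projection error identity and to derive it by direct computation using the eigendecomposition of the correlation matrix $\mathbb{C} = \mathbb{A}^{T}\mathbb{M}\mathbb{A}$ that was introduced before the statement. First I would complete $\{\varphi_i\}_{i=1}^{R}$ to the full orthonormal basis $\{\varphi_i\}_{i=1}^{S}$ of $X_{h,S}$, so that every snapshot admits the Fourier expansion $u_{h,S}^{j,m} = \sum_{i=1}^{S}(u_{h,S}^{j,m},\varphi_i)\varphi_i$. Using the $L^{2}$-orthonormality of this basis, the projection error collapses to the Bessel-type tail
\begin{equation*}
\Bigl\| u_{h,S}^{j,m} - \sum_{i=1}^{R}(u_{h,S}^{j,m},\varphi_i)\varphi_i\Bigr\|^{2} = \sum_{i=R+1}^{S}(u_{h,S}^{j,m},\varphi_i)^{2}.
\end{equation*}

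The second step is to compute the individual Fourier coefficients in terms of the eigendata. Writing $\vec u_{S}^{\,j,m}$ for the $(j,m)$-th column of $\mathbb{A}$ and using $\vec\varphi_i = \lambda_i^{-1/2}\mathbb{A}\vec a_i$, I would get
\begin{equation*}
(u_{h,S}^{j,m},\varphi_i) = (\vec u_{S}^{\,j,m})^{T}\mathbb{M}\vec\varphi_i = \lambda_i^{-1/2}\,\bigl(\mathbb{A}^{T}\mathbb{M}\mathbb{A}\,\vec a_i\bigr)_{(j,m)} = \lambda_i^{-1/2}(\mathbb{C}\vec a_i)_{(j,m)} = \sqrt{\lambda_i}\,(\vec a_i)_{(j,m)},
\end{equation*}
using the eigenvalue equation $\mathbb{C}\vec a_i=\lambda_i\vec a_i$. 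Summing over the double index $(j,m)$ and invoking $|\vec a_i|=1$ then collapses the double sum to a single eigenvalue:
\begin{equation*}
\sum_{j=1}^{J_S}\sum_{m=0}^{N_S}(u_{h,S}^{j,m},\varphi_i)^{2} = \lambda_i\,|\vec a_i|^{2} = \lambda_i .
\end{equation*}
Combining this with the Bessel tail identity above, swapping the order of summation, and normalizing by $J_S(N_S+1)$ yields the first claim \eqref{errL2_1}.

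The second estimate \eqref{errL2_2} is then an immediate consequence of nonnegativity. For any fixed $j$, the inner sum $\sum_{m=0}^{N_S}\|\cdot\|^{2}$ is bounded by the full double sum $\sum_{j=1}^{J_S}\sum_{m=0}^{N_S}\|\cdot\|^{2}$. Rearranging the first identity as $\sum_{j,m}\|\cdot\|^{2} = J_S(N_S+1)\sum_{i=R+1}^{J_S(N_S+1)}\lambda_i$ and dividing by $N_S+1$ gives exactly \eqref{errL2_2}.

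There is no real obstacle in this argument; the only thing that needs care is keeping track of the two different roles played by the snapshot matrix $\mathbb{A}$, namely as the linear map whose singular vectors produce the POD basis and as the object whose columns are the snapshots themselves. Once the identity $(u_{h,S}^{j,m},\varphi_i) = \sqrt{\lambda_i}(\vec a_i)_{(j,m)}$ is written down, the rest is essentially Parseval together with $|\vec a_i|=1$.
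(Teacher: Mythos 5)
Your argument is correct in substance and is essentially the argument the paper defers to by citation: the paper proves \eqref{errL2_1} simply by invoking Theorem~3 of \cite{V01}, whose proof is exactly your computation (Parseval in the completed orthonormal basis of $X_{h,S}$, the identity $(u_{h,S}^{j,m},\varphi_i)=\sqrt{\lambda_i}\,(\vec a_i)_{(j,m)}$ obtained from $\vec\varphi_i=\lambda_i^{-1/2}\mathbb{A}\vec a_i$ and $\mathbb{C}\vec a_i=\lambda_i\vec a_i$, and then $|\vec a_i|=1$), and \eqref{errL2_2} follows by nonnegativity exactly as you say. The one step that does not literally follow as written is your last sentence for \eqref{errL2_1}: your (correct) computation yields the unnormalized identity $\sum_{j=1}^{J_S}\sum_{m=0}^{N_S}\|\cdot\|^2=\sum_{i=R+1}^{S}\lambda_i$ for the eigenvalues of $\mathbb{C}=\mathbb{A}^{T}\mathbb{M}\mathbb{A}$ as the paper defines it, so ``normalizing by $J_S(N_S+1)$'' produces $\frac{1}{J_S(N_S+1)}\sum_{i>R}\lambda_i$ on the right-hand side, not $\sum_{i>R}\lambda_i$. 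The identity in the form \eqref{errL2_1} holds only if the $\lambda_i$ are taken to be the eigenvalues of the sample-averaged correlation matrix $\frac{1}{J_S(N_S+1)}\mathbb{A}^{T}\mathbb{M}\mathbb{A}$, which is Volkwein's convention. This normalization mismatch is already present in the paper's statement relative to its own definition of $\mathbb{C}$, so it is not a flaw in your derivation, but you should either record the unnormalized identity or explicitly note the rescaling of the $\lambda_i$ rather than asserting that division by $J_S(N_S+1)$ closes the gap. (The discrepancy between summing to $S=\dim X_{h,S}$ and to $J_S(N_S+1)$ is harmless since the remaining eigenvalues vanish.)
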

\begin{proof}
The proof of \eqref{errL2_1} follows exactly the proof of \cite[Theorem 3]{V01}; \eqref{errL2_2} is then a direct consequence of \eqref{errL2_1}.
\end{proof}

\begin{lemma}\label{lm:H1err}{\rm [$H^1(\Omega)$ norm of the error between snapshots and their projections in the POD space]}  We have
\begin{equation}\label{errH1_1}
\frac{1}{J_S(N_S+1)} \sum_{j=1}^{J_S} \sum_{m=0}^{N_S}\Big \| \nabla \Big( u_{h,S}^{j,m}-\sum_{i=1}^R (u_{h,S}^{j,m}, \varphi_i)\varphi_i\Big)\Big\| ^2 
=\sum_{i=R+1}^{J_S(N_S+1)} \lambda_i  \|  \nabla \varphi_i\|^2
\end{equation}
and thus for $j=1,\ldots,J_S$,
\begin{equation}\label{errH1_2}
\frac{1}{N_S+1} \sum_{m=0}^{N_S} \Big\| \nabla\Big( u_{h,S}^{j,m}-\sum_{i=1}^R (u_{h,S}^{j,m}, \varphi_i)\varphi_i \Big)\Big\| ^2 \leq J_S\sum_{i=R+1}^{J_S(N_S+1)} \lambda_i  \|  \nabla \varphi_i\|^2.
\end{equation}
\end{lemma}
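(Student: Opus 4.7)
The plan is to mirror the proof of Lemma \ref{lm:L2err} (following \cite[Theorem 3]{V01}), with care taken for the fact that $\{\nabla\varphi_i\}_{i=1}^S$ is generally \emph{not} $L^2$-orthogonal. First, recall that $\{\varphi_i\}_{i=1}^R$ is the truncation of the full $L^2(\Omega)$-orthonormal basis $\{\varphi_i\}_{i=1}^S$ of $X_{h,S}$ produced by the positive eigenpairs of $\mathbb{C}$, where $S=\dim X_{h,S}$. Each snapshot therefore admits the finite Fourier expansion $u_{h,S}^{j,m}=\sum_{i=1}^S(u_{h,S}^{j,m},\varphi_i)\varphi_i$, so the projection error is the tail $\sum_{i=R+1}^S(u_{h,S}^{j,m},\varphi_i)\varphi_i$. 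Taking the gradient, squaring the $L^2$-norm, and expanding the resulting double sum gives
$$\Big\|\nabla\Big(u_{h,S}^{j,m}-\sum_{i=1}^R(u_{h,S}^{j,m},\varphi_i)\varphi_i\Big)\Big\|^2=\sum_{i,i'=R+1}^S(u_{h,S}^{j,m},\varphi_i)(u_{h,S}^{j,m},\varphi_{i'})(\nabla\varphi_i,\nabla\varphi_{i'}).$$

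The crucial step is to sum this identity over all snapshots and exploit the eigenstructure of $\mathbb{C}=\mathbb{A}^T\mathbb{M}\mathbb{A}$. From $\vec\varphi_i=\lambda_i^{-1/2}\mathbb{A}\vec{a}_i$, $\mathbb{C}\vec{a}_i=\lambda_i\vec{a}_i$, and $\vec{a}_i^T\vec{a}_{i'}=\delta_{i,i'}$, a short matrix computation gives $(u_{h,S}^{j,m},\varphi_i)=\sqrt{\lambda_i}\,(\vec{a}_i)_{k(j,m)}$, where $k(j,m)$ indexes the column of $\mathbb{A}$ assigned to the snapshot $u_{h,S}^{j,m}$. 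Hence
$$\sum_{j=1}^{J_S}\sum_{m=0}^{N_S}(u_{h,S}^{j,m},\varphi_i)(u_{h,S}^{j,m},\varphi_{i'})=\sqrt{\lambda_i\lambda_{i'}}\,\vec{a}_i^T\vec{a}_{i'}=\lambda_i\delta_{i,i'},$$
so that after summation every off-diagonal cross term vanishes and only the diagonal contribution $\sum_{i=R+1}^S\lambda_i\|\nabla\varphi_i\|^2$ survives. Dividing by $J_S(N_S+1)$ in accord with the paper's normalization convention, and extending the upper summation limit to $J_S(N_S+1)$ with the understanding that $\lambda_i=0$ for $i>S$, yields \eqref{errH1_1}.

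For \eqref{errH1_2}, since every summand on the left of \eqref{errH1_1} is non-negative, for each fixed $j$ one has
$$\frac{1}{N_S+1}\sum_{m=0}^{N_S}\Big\|\nabla\Big(u_{h,S}^{j,m}-\sum_{i=1}^R(u_{h,S}^{j,m},\varphi_i)\varphi_i\Big)\Big\|^2\le\frac{J_S}{J_S(N_S+1)}\sum_{j'=1}^{J_S}\sum_{m=0}^{N_S}\Big\|\nabla\Big(u_{h,S}^{j',m}-\sum_{i=1}^R(u_{h,S}^{j',m},\varphi_i)\varphi_i\Big)\Big\|^2,$$
after which \eqref{errH1_1} closes the argument. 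I expect the main obstacle to be precisely the diagonalization identity above: unlike the $L^2$ case, where snapshot-wise cross terms vanish immediately by the $L^2$-orthonormality of $\{\varphi_i\}$, here the gradient inner products $(\nabla\varphi_i,\nabla\varphi_{i'})$ are generally nonzero off-diagonal, so one must instead extract diagonality from the summation over $(j,m)$ via the POD eigenstructure, producing the weighted tail $\sum_{i=R+1}^{J_S(N_S+1)}\lambda_i\|\nabla\varphi_i\|^2$.
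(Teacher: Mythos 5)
Your argument is correct and is essentially the proof the paper invokes by citation (Lemma 3.2 of Iliescu--Wang \cite{IW14}): expand each snapshot in the full POD basis, reduce $(u_{h,S}^{j,m},\varphi_i)$ to $\sqrt{\lambda_i}\,(\vec{a}_i)_{k(j,m)}$ via the eigenstructure of $\mathbb{C}$, and let the orthonormality of the $\vec{a}_i$ annihilate the off-diagonal gradient cross terms after summing over all snapshots, which is exactly the right way to handle the failure of $L^2$-orthogonality of the gradients. The only caveat is the $1/(J_S(N_S+1))$ prefactor: with $\mathbb{C}=\mathbb{A}^{T}\mathbb{M}\mathbb{A}$ as defined in Section 3.2 your computation yields the identity \emph{without} that prefactor, and matching the stated form requires the $\lambda_i$ to be eigenvalues of the normalized correlation matrix --- an inconsistency already present in the paper's own Lemma \ref{lm:L2err} rather than a defect of your argument.
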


\begin{proof}
The proof of \eqref{errH1_1} follows exactly the proof of \cite[Lemma 3.2]{IW14}; \eqref{errH1_2} is then direct consequence of \eqref{errH1_1}.
\end{proof}

\begin{lemma}\label{lm:Projerr}{\rm [Error in the projection onto the POD space]}
Consider the partition $0=t_0<t_1< \cdots < t_{N_S} = T$ used in Section \ref{snapshot}. 
For any $u \in H^{1}(0,T;[H^{s+1}(\Omega)]^d)$, let $u^{m}=u(\cdot, t_m)$. Then, the error in the projection onto the POD space $X_R$ satisfies the estimates
\begin{equation}\label{pel2}
\begin{aligned}
\frac{1}{N_S+1}& \sum_{m=0}^{N_S} \|  u^{m}-\Pi_R u^m\| ^2
\\&
\leq \inf_{j\in\{1,\ldots,J_S\}}\frac{2}{N_S+1} \sum_{m=0}^{N_S} \|  u^{m}-u^{j,m}_S\| ^2+C\left( h^{2s+2} + \triangle t^4  \right)+ 2J_S\sum_{i=R+1}^{J_S(N_S+1)} \lambda_i 
\end{aligned}
\end{equation}
\begin{equation}\label{peh1}
\begin{aligned}
\frac{1}{N_S+1} &\sum_{m=0}^{N_S} \|  \nabla \left(u^{m}-\Pi_R u^m\right)\| ^2
\\&
\leq \inf_{{j\in\{1,\ldots,J_S\}}}\frac{2}{N_S+1} \sum_{m=0}^{N_S} \left(\|  \nabla (u^{m}-u_S^{j,m})\| ^2+\|\hspace{-1pt}|  {\mathbb S}_R \|\hspace{-1pt}| _2 \|   u^{m}-u_S^{j,m}\| ^2  \right)
\\&
+(C+h^2 \|\hspace{-1pt}|  {\mathbb S}_R \|\hspace{-1pt}| _2 ) h^{2s} + (C+\|\hspace{-1pt}|  {\mathbb S}_R \|\hspace{-1pt}| _2 )\triangle t^4  
+ 2J_S\sum_{i=R+1}^{J_S(N_S+1)} \|  \nabla \varphi_i\| ^2\lambda_i . 
\end{aligned}
\end{equation}
\end{lemma}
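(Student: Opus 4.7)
The plan is to use the $L^2$-best-approximation property of $\Pi_R$ and insert, as an intermediate element in $X_R$, the POD truncation of a suitable discrete snapshot. For a fixed $j \in \{1,\ldots,J_S\}$, let
$$
\chi^{j,m} := \sum_{i=1}^{R}(u_{h,S}^{j,m},\varphi_i)\,\varphi_i \in X_R
$$
and decompose
$$
u^m - \chi^{j,m} = (u^m - u_S^{j,m}) + (u_S^{j,m} - u_{h,S}^{j,m}) + (u_{h,S}^{j,m} - \chi^{j,m}).
$$
The three pieces are, respectively, a \emph{snapshot-continuity gap} (between the target trajectory and the $j$-th snapshot trajectory at time $t_m$), the \emph{finite-element discretization error} on the snapshot, and the \emph{POD truncation error}.

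For \eqref{pel2}, the $L^2$-best-approximation property of $\Pi_R$ gives $\|u^m - \Pi_R u^m\| \leq \|u^m - \chi^{j,m}\|$. Squaring the triangle inequality on the three-term splitting, summing over $m=0,\ldots,N_S$, and dividing by $N_S+1$ produces three averaged contributions. The snapshot-continuity gap appears directly in the right-hand side of \eqref{pel2}. The FE-error contribution $\frac{1}{N_S+1}\sum_m\|u_S^{j,m} - u_{h,S}^{j,m}\|^2$ is bounded by the hypothesis $\|u - u_h\| \leq C(h^{s+1}+\Delta t^2)$, squared, yielding the $C(h^{2s+2}+\Delta t^4)$ term. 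The POD truncation contribution is exactly the quantity controlled by \eqref{errL2_2} of Lemma \ref{lm:L2err}, giving $J_S\sum_{i=R+1}^{J_S(N_S+1)}\lambda_i$. Taking $\inf_{j}$ completes the estimate.

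For \eqref{peh1}, the main obstacle is that $\Pi_R$ is the $L^2$ projection, not the $H^1$ projection, so best approximation fails in the gradient norm. I would bypass this by splitting
$$
u^m - \Pi_R u^m = (u^m - \chi^{j,m}) - (\Pi_R u^m - \chi^{j,m}).
$$
Since $\Pi_R u^m - \chi^{j,m} \in X_R$, the inverse-type inequality \eqref{lm:inverse} yields
$$
\|\nabla(\Pi_R u^m - \chi^{j,m})\| \leq \|\hspace{-1pt}|\mathbb{S}_R\|\hspace{-1pt}|_2^{1/2}\|\Pi_R u^m - \chi^{j,m}\|,
$$
and the $L^2$ norm on the right is controlled by the triangle inequality plus the $L^2$ best-approximation property already exploited above, producing the mixed terms $\|\hspace{-1pt}|\mathbb{S}_R\|\hspace{-1pt}|_2\,\|u^m - u_S^{j,m}\|^2$ in the bound. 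The remaining $\|\nabla(u^m - \chi^{j,m})\|^2$ piece is handled by the same three-term splitting: its FE error contribution is bounded using the second hypothesis $\|\nabla(u - u_h)\| \leq C(h^s + \Delta t^2)$, and its POD truncation contribution is furnished by Lemma \ref{lm:H1err}, estimate \eqref{errH1_2}, giving the term with $\|\nabla\varphi_i\|^2\lambda_i$. Collecting everything explains the mixed coefficients $(C + h^2\|\hspace{-1pt}|\mathbb{S}_R\|\hspace{-1pt}|_2)h^{2s}$ and $(C + \|\hspace{-1pt}|\mathbb{S}_R\|\hspace{-1pt}|_2)\Delta t^4$, in which the pure $C$ portions come from the $H^1$ FE error and the $\|\hspace{-1pt}|\mathbb{S}_R\|\hspace{-1pt}|_2$-weighted portions come from the $L^2$ FE error multiplied by the inverse-inequality factor (noting $h^{2s+2} = h^2\cdot h^{2s}$).

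The hard part is the $H^1$ estimate: the $L^2$ projection is not optimal in $\|\nabla\cdot\|$, so the proof must carefully route every gradient through \eqref{lm:inverse}, introduce the $\|\hspace{-1pt}|\mathbb{S}_R\|\hspace{-1pt}|_2^{1/2}$ factor in the right places, and combine it consistently with the \emph{two} FE approximation estimates and the two POD-truncation lemmas so that each piece of the final right-hand side lands in its correct slot. Every individual manipulation is a routine triangle/Young inequality, but the bookkeeping across both norms is the delicate step.
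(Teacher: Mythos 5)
Your proposal is correct and follows essentially the same route as the paper's proof: the $L^2$ best-approximation property of $\Pi_R$ with the intermediate element $\Pi_R u_{h,S}^{j,m}$ and the three-term splitting for \eqref{pel2}, and for \eqref{peh1} the same splitting plus routing the term $\nabla(\Pi_R u^m - \Pi_R u_{h,S}^{j,m})$ through the inverse inequality \eqref{lm:inverse} and the $L^2$-stability of the projection, with Lemmas \ref{lm:L2err} and \ref{lm:H1err} supplying the POD truncation terms. The bookkeeping you describe reproduces the paper's coefficients exactly.
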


\begin{proof}
The proof is provided in Appendix \ref{app2}.
\end{proof}

To bound the error between the POD based approximations and the true solutions, we assume
the following regularity for the true solutions and body forces:
\begin{gather*}
u^{j} \in L^{\infty}(0,T;H^{s+1}(\Omega))\cap H^{1}(0,T;H^{s+1}(\Omega))\cap
H^{2}(0,T;L^{2}(\Omega)),\\
p^{j} \in L^{2}(0,T;H^{s}(\Omega)),\quad \text{and}\quad f^{j} \in L^{2}
(0,T;L^{2}(\Omega)).
\end{gather*}
{
We assume the following estimate is also valid as done in \cite{IW14}.
\begin{assumption}\label{assumption1}
Consider the partition $0=t_0<t_1< \cdots < t_{N_S} = T$ used in Section \ref{snapshot}. 
For any $u \in H^{1}(0,T;[H^{s+1}(\Omega)]^d)$, let $u^{m}=u(\cdot, t_m)$. Then, the error in the projection onto the POD space $X_R$ satisfies the estimates

\begin{equation}\label{peh2}
\begin{aligned}
& \|  \nabla \left(u^{m}-\Pi_R u^m\right)\| ^2
\\&
\leq \inf_{{j\in\{1,\ldots,J_S\}}}\frac{2}{N_S+1} \sum_{m=0}^{N_S} \left(\|  \nabla (u^{m}-u_S^{j,m})\| ^2+\|\hspace{-1pt}|  {\mathbb S}_R \|\hspace{-1pt}| _2 \|   u^{m}-u_S^{j,m}\| ^2  \right)
\\&
+(C+h^2 \|\hspace{-1pt}|  {\mathbb S}_R \|\hspace{-1pt}| _2 ) h^{2s} + (C+\|\hspace{-1pt}|  {\mathbb S}_R \|\hspace{-1pt}| _2 )\triangle t^4  
+ 2J_S\sum_{i=R+1}^{J_S(N_S+1)} \|  \nabla \varphi_i\| ^2\lambda_i . 
\end{aligned}
\end{equation}
\end{assumption}
}
Let $e^{j,n}=u^{j,n}-u_{R}^{j,n}$ be the error between the true solution
and the POD approximation, then we have the following error estimates.

\begin{theorem}
[Error analysis of En-POD]\label{th:errEn-POD} Consider the
method (\ref{eq: conv}) and the partition $0=t_0<t_1< \cdots <t_{N_S}=T $ used in Section 3.1. Suppose that for any $0\leq n\leq N_S$, the following conditions hold
\begin{gather}
 \frac{C\triangle t \|\hspace{-1pt}|  {\mathbb S}_R \|\hspace{-1pt}| _2^{1/2}}{\nu}\| \nabla
(u_{R}^{ j,n}-<u_R>^n)\| ^{2} <1\text{ , \qquad} j=1,...,J. \label{cond:err}
\end{gather}
Then, for any $1\leq N\leq N_S$, there is a positive constant $C$ such that
\begin{equation}\label{ineq:err00}
\begin{aligned}
\frac{1}{2}\|  e^{j,N}\| ^{2}&+C\Delta t\sum_{n=0}^{N-1}\nu\| \nabla e^{j,n+1}\| ^{2}\\
&\leq C\Bigg ( \Delta t^2+h^{2s} +\Delta t \|\hspace{-1pt}|  {\mathbb S}_R \|\hspace{-1pt}| _2^{-1/2}
+ \|\hspace{-1pt}|  {\mathbb S}_R \|\hspace{-1pt}| _2 \Delta t^4+ \|\hspace{-1pt}|  {\mathbb S}_R \|\hspace{-1pt}| _2 h^{2s+2}\\
&+ \|\hspace{-1pt}|  {\mathbb S}_R \|\hspace{-1pt}| _2^{-1/2} h^{2s}\Delta t^{-1}
+ \|\hspace{-1pt}|  {\mathbb S}_R \|\hspace{-1pt}| _2^{1/2} h^{2s+2}\Delta t^{-1}
+  \|\hspace{-1pt}|  {\mathbb S}_R \|\hspace{-1pt}| _2^{1/2} \Delta t^3\\
& +(1+ N_S\ \|\hspace{-1pt}|  {\mathbb S}_R \|\hspace{-1pt}| _2^{-1/2}) \Big(\inf_{{j\in\{1,\ldots,J_S\}}}\frac{1}{N_S} \sum_{m=1}^{N_S} (\|  \nabla (u^{m}-u_S^{j,m})\| ^2\\
&\quad+\|\hspace{-1pt}|  {\mathbb S}_R \|\hspace{-1pt}| _2 \|   u^{m}-u_S^{j,m}\| ^2  )+ J_S\sum_{i=R+1}^{J_SN_S} \|  \nabla \varphi_i\| ^2\lambda_i  \Big)\Bigg)
\end{aligned}
\end{equation}
\end{theorem}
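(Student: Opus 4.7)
The plan is to mimic the standard finite-element error analysis for a linearized IMEX scheme for the NSE, but with two extra ingredients that are specific to En-POD: the ensemble splitting of the nonlinear term, and the POD inverse inequality \eqref{lm:inverse}. First I would decompose the error as
\begin{equation*}
e^{j,n+1} = \eta^{j,n+1} + \phi_{R}^{j,n+1},\qquad \eta^{j,n+1} := u^{j,n+1} - \Pi_{R}u^{j,n+1},\qquad \phi_{R}^{j,n+1} := \Pi_{R}u^{j,n+1} - u_{R}^{j,n+1}\in X_{R}.
\end{equation*}
Bounds on $\eta^{j,n}$ come directly from Lemma \ref{lm:Projerr} and Assumption \ref{assumption1}; the remainder of the argument controls $\phi_{R}^{j,n+1}$ through an energy estimate. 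Because $\Pi_{R}$ is the $L^{2}$-projection onto $X_{R}$, any $L^{2}$-inner product of $\eta^{j,m}$ with a test function in $X_{R}$ (in particular with $\phi_{R}^{j,n+1}$) vanishes, which eliminates the $L^{2}$-projection part of the time-derivative consistency term at the outset.

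Next I would evaluate the weak form \eqref{wfwf} at $t=t^{n+1}$, subtract the En-POD equation \eqref{En-POD-Weak}, and test with $\varphi=\phi_{R}^{j,n+1}\in X_{R}$. Applying the polarization identity to the discrete time derivative yields an inequality of the form
\begin{equation*}
\frac{\|\phi_{R}^{j,n+1}\|^{2}-\|\phi_{R}^{j,n}\|^{2}}{2\Delta t}+\frac{\|\phi_{R}^{j,n+1}-\phi_{R}^{j,n}\|^{2}}{2\Delta t}+\nu\|\nabla\phi_{R}^{j,n+1}\|^{2} \le \mathcal{R}_{\text{time}}+\mathcal{R}_{\text{proj}}+\mathcal{R}_{\text{pres}}+\mathcal{N}^{j,n+1},
\end{equation*}
where $\mathcal{R}_{\text{time}}$ is the temporal consistency $(u_{t}^{j,n+1}-(u^{j,n+1}-u^{j,n})/\Delta t,\phi_{R}^{j,n+1})$ handled via a Taylor expansion in $t$ and the $H^{2}(0,T;L^{2})$ regularity of $u^{j}$; $\mathcal{R}_{\text{proj}}$ collects the diffusive projection term $\nu(\nabla\eta^{j,n+1},\nabla\phi_{R}^{j,n+1})$; $\mathcal{R}_{\text{pres}}$ is the pressure contribution $(p^{j,n+1}-q_{h},\nabla\cdot\phi_{R}^{j,n+1})$ for any $q_{h}\in Q_{h}$, which is needed because $\phi_{R}^{j,n+1}\in V_{h}$ is only \emph{discretely} divergence-free, and which is bounded by $\inf_{q_{h}}\|p^{j,n+1}-q_{h}\|\,\|\nabla\phi_{R}^{j,n+1}\|$ using $\|\nabla\cdot v\|\le\sqrt{d}\,\|\nabla v\|$; and $\mathcal{N}^{j,n+1}$ is the nonlinear ensemble discrepancy analyzed below.

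The decisive step is the treatment of
\begin{equation*}
\mathcal{N}^{j,n+1}=b^{\ast}(u^{j,n+1},u^{j,n+1},\phi_{R}^{j,n+1})-b^{\ast}(<u_{R}>^{n},u_{R}^{j,n+1},\phi_{R}^{j,n+1})-b^{\ast}(u_{R}^{j,n}-<u_{R}>^{n},u_{R}^{j,n},\phi_{R}^{j,n+1}).
\end{equation*}
Following the stability analysis of En-full-FE in \cite{JL14}, I would add and subtract the true-ensemble analogues $b^{\ast}(<u>^{n},u^{j,n+1},\phi_{R}^{j,n+1})+b^{\ast}(u^{j,n}-<u>^{n},u^{j,n},\phi_{R}^{j,n+1})$ and regroup so that each resulting summand involves either the ensemble-mean error $<u>^{n}-<u_{R}>^{n}$, a per-member error $u^{j,m}-u_{R}^{j,m}$ which is further split into $\eta^{j,m}+\phi_{R}^{j,m}$, or a time increment $u^{j,n+1}-u^{j,n}$ controlled by the smoothness of $u^{j}$ to yield a factor of $\Delta t$. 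Each piece is then estimated using \eqref{In1}--\eqref{In2}, the inverse inequality \eqref{lm:inverse}, and Young's inequality with weights tuned so that the total coefficient on $\|\nabla\phi_{R}^{j,n+1}\|^{2}$ appearing on the right is strictly less than $\nu$ and can be absorbed. The CFL-like hypothesis \eqref{cond:err} is used precisely to dominate a term of the form $C\nu^{-1}\|\hspace{-1pt}|{\mathbb S}_{R}\|\hspace{-1pt}|_{2}^{1/2}\|\nabla(u_{R}^{j,n}-<u_{R}>^{n})\|^{2}\,\|\phi_{R}^{j,n+1}-\phi_{R}^{j,n}\|^{2}$ by the $\tfrac{1}{2\Delta t}\|\phi_{R}^{j,n+1}-\phi_{R}^{j,n}\|^{2}$ on the left, exactly as in the proof of Theorem \ref{th:En-POD}.

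Finally, I would sum over $n=0,\ldots,N-1$, telescope the pure $L^{2}$ differences of $\|\phi_{R}^{j,n}\|^{2}$, and apply a discrete Gronwall argument to the residual $\Delta t\sum_{n}\|\phi_{R}^{j,n}\|^{2}$ contribution. Substituting \eqref{pel2}, \eqref{peh1}, and Assumption \ref{assumption1}---the last of which is precisely what lets me convert a \emph{pointwise} $H^{1}$-projection error arising from the nonlinear estimate at level $n+1$ into the snapshot/eigenvalue quantities on the right-hand side of \eqref{ineq:err00}---and using the triangle inequality on $\|e^{j,N}\|$ and $\|\nabla e^{j,n+1}\|$ produces \eqref{ineq:err00}. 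The main obstacle is bookkeeping: each invocation of \eqref{lm:inverse} introduces a factor of $\|\hspace{-1pt}|{\mathbb S}_{R}\|\hspace{-1pt}|_{2}^{1/2}$, and these powers must combine with the appropriate powers of $\Delta t^{-1}$ and $h^{2s}$ to match the precise mixed scaling displayed in the statement; the Young's-inequality weights must be chosen simultaneously so that every $\nu\|\nabla\phi_{R}^{j,n+1}\|^{2}$-producing term is absorbable and so that the remaining coefficient on $\|\phi_{R}^{j,n}\|^{2}$ is integrable by discrete Gronwall under \eqref{cond:err}.
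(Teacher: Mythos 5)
Your proposal follows essentially the same route as the paper's proof in Appendix C: the splitting $e^{j,n}=\eta^{j,n}+\xi_R^{j,n}$ with the $L^2$ projection killing the time-difference projection term, the energy test with the $X_R$-part of the error, the pressure handled via discrete divergence-freeness, the nonlinear ensemble terms regrouped and bounded with \eqref{In1}--\eqref{In2} and the POD inverse inequality \eqref{lm:inverse}, the time-step condition \eqref{cond:err} absorbing the resulting $\|\hspace{-1pt}|{\mathbb S}_R\|\hspace{-1pt}|_2^{1/2}$-weighted term, and finally Lemma \ref{lm:Projerr}, Assumption \ref{assumption1}, discrete Gronwall, and the triangle inequality. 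The only cosmetic differences are the exact regrouping of the trilinear terms (the paper recombines them into $b^{\ast}(u_R^{j,n},u_R^{j,n+1},\cdot)$ plus a fluctuation-times-increment term rather than adding and subtracting the true-ensemble analogues) and the precise slot in Young's inequality where \eqref{cond:err} is invoked; neither affects the validity of the argument.
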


\begin{proof}
The proof is provided in Appendix \ref{app3}.
\end{proof}

\section{Numerical simulations}\label{numexp}

We investigate the efficacy of our algorithm via the numerical simulation of a flow between two offset circles \cite{JL14}. Before we discuss the examples and the numerical results, we briefly discuss the computational costs associated with the \emph{En-POD} algorithm and how they compare to those of the \emph{En-full-FE} algorithm.

\subsection{Computational costs}

As stated in Section \ref{PODsec}, we can split the computational cost of our algorithm into offline and online portions. In the offline portion, we generate the snapshot matrix $\mathbb{A}$ by solving the Navier-Stokes equations for $J_S$ perturbations. Using $\mathbb{A}$, we then generate a  reduced basis to be used in our online calculations. It is fair to assume that the cost of creating the snapshot matrix will dominate the cost of generating the reduced basis associated with the eigenvalue problem \eqref{eigProb}, especially when we consider that there exist very efficient techniques \cite{H90} for determining the partial SVD of matrices. 

Turning to the cost of solving the Navier-Stokes equation, the discrete systems that arise from a FEM discretization have been studied at great length. Whereas it is possible to use a nonlinear solver such as Newton's method or a nonlinear multigrid iteration, these methods often suffer from a lack of robustness. Instead, it is more popular to linearize the system and then to use the Schur complement approach. This allows for the use of a linear multigrid solver or Krylov method such as GMRES to solve the problem. For full details, see, e.g., \cite{R00}. Unfortunately, there are a number of factors such as the mesh size, the value of the Reynolds number, and the choice of pre-conditioner which make it very difficult to precisely estimate how quickly these methods converge.  

Estimating the online cost of the {\em En-POD} method, however, is much easier. Because the POD discrete system is small and dense and the ensemble method has $J$ right-hand sides, the most efficient way to solve this problem is, at each time step, to do a single LU factorization and a backsolve for each right-hand side. Denoting again by $R$ the cardinality of the reduced basis, the online cost of the {\em En-POD} method is
\begin{gather}
RB_{online} = N {\mathcal O}(R^{3}) + N J {\mathcal O}(R^{2}).
\end{gather}
We note that this process is highly parallelizable. For example, if we have access to $J$ total processors, then we can remove the factor $J$ in the second term.

It is important to note that the {\em assembly} of the low-dimensional reduced basis system requires manipulations involving the reduced basis which, as we have seen, are finite element functions so that, in general, that assembly involves computational costs that depend on the dimension of the finite element space. Thus, naive implementations of a reduced basis method involve assembly costs that are substantially greater than solving costs and which, given the availability of very efficient solvers, do not result in significant savings compared to that incurred by the full finite element discretization. For linear problems the stiffness matrix is independent of the solution so that one can assemble the small reduced basis stiffness matrix during the offline stage. For nonlinear problems, the discrete system changes at each time step (and generally at each interrogation of a nonlinear solver) so that, in general, it is not an easy matter to avoid the high assembly costs. However, because the nonlinearity in the Navier-Stokes system is quadratic, the assembly costs can again be shifted to the offline stage during which one assembles a low-dimensional third-order tensor that can be reused throughout the calculations.

Turning to the computational cost for the FEM ensemble method, as mentioned previously, the most efficient way to solve the resulting systems is a block solver (e.g., block GMRES). In trying to estimate the computational cost, we run into the same problem as we do for estimating the cost of solving the standard FEM discretization of the Navier-Stokes problem; specifically, it is very difficult to precisely determine how quickly any block solver converges.

Due to the difficulties outlined above in a priori estimation of the computational costs for both our algorithms we omit any CPU time comparison in the numerical experiments. Instead, we focus on the accuracy of our {\em En-POD} method, demonstrating that it is possible to achieve similar results as those given by the {\em En-full-FE} method. A more rigorous and thorough analysis comparing the computational cost of the {\em En-POD} and {\em En-full-FE} method is a subject of future research.

\subsection{Flow between two offset circles}\label{twoc}

For the numerical experiment we examine the two-dimensional flow between two offset circles with viscosity coefficient $\nu = \frac{1}{200}$. Specifically, the domain is a disk with a smaller offset disc inside. Let $r_{1}=1$, $r_{2}=0.1$, $c_{1}=1/2$, and $c_{2}=0$; then the domain is given by
\[
\Omega=\{(x,y):x^{2}+y^{2}\leq r_{1}^{2} \text{ and } (x-c_{1})^{2}
+(y-c_{2})^{2}\geq r_{2}^{2}\}.
\]

\noindent No-slip, no-penetration boundary conditions are imposed on both circles. All computations are done using the FEniCS software suite \cite{LNW12}. The deterministic flow driven by the counterclockwise rotational body force
\[
f(x,y,t)=\big(-4y(1-x^{2}-y^{2})\,,\,4x(1-x^{2}-y^{2})\big)^{T}
\]
displays interesting structures interacting with the inner circle. A K\'arm\'an vortex street is formed which then re-interacts with the inner circle and with itself, generating complex flow patterns. 

For our test problems, we generate perturbed initial conditions by solving a steady Stokes problem with perturbed body forces given by 
\[
f_{\epsilon}(x,y,t)=f(x,y,t)+\epsilon\big(\sin(3\pi x)\sin(3\pi y),\cos(3\pi
x)\cos(3\pi y)\big)^{T}
\]
with different perturbations defined by varying $\epsilon$. We discretize in space via the $P^2$-$P^1$ Taylor-Hood element pair. Meshes were generated using the FEniCS built-in \textbf{mshr} package with varying refinement levels. An example mesh is given in Figure \ref{meshEx1}.

\begin{figure}[h!]
\centering
\includegraphics{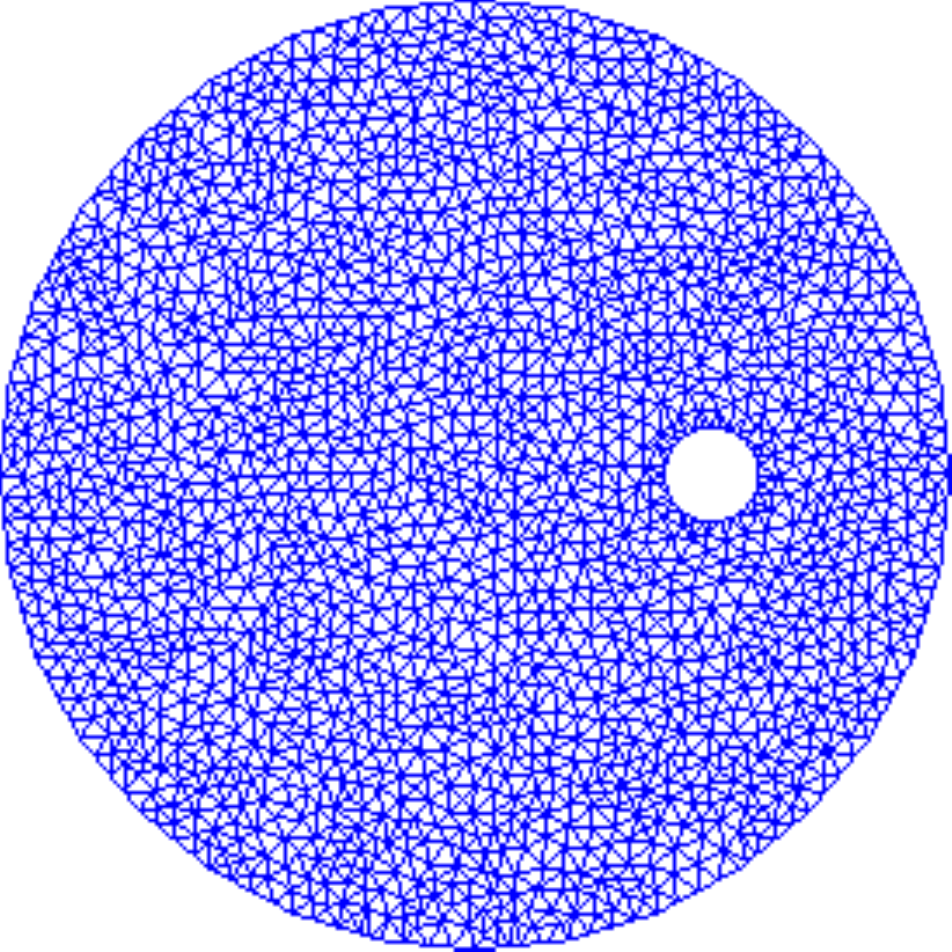} 
\caption{Mesh for flow between offset circles resulting in 16,457 total degrees of freedom for the Taylor-Hood element pair.}
\label{meshEx1}
\end{figure}

In order to generate the POD basis, we use two perturbations of the initial conditions corresponding to $\epsilon_{1} = 10^{-3}$ and $\epsilon_{2} = -10^{-3}$. Using a mesh that results in 16,457 total degrees of freedom and a fixed time step $\Delta t = .025$, we run a standard full finite element code\footnote{We also generated snapshots using the {\em En-full-FE} method. We found that we obtained exactly the same results as those reported here if instead we use a standard finite element method.} for each perturbation from $t_{0} = 0$ to $T=5$. For the time discretization we use the Crank-Nicolson method and take snapshots every $0.1$ seconds. In Figure \ref{eigvaldecay}, we illustrate the decay of the singular values generated the snapshot matrix.

\begin{figure}[h!]
\centering
\includegraphics[width=8cm]{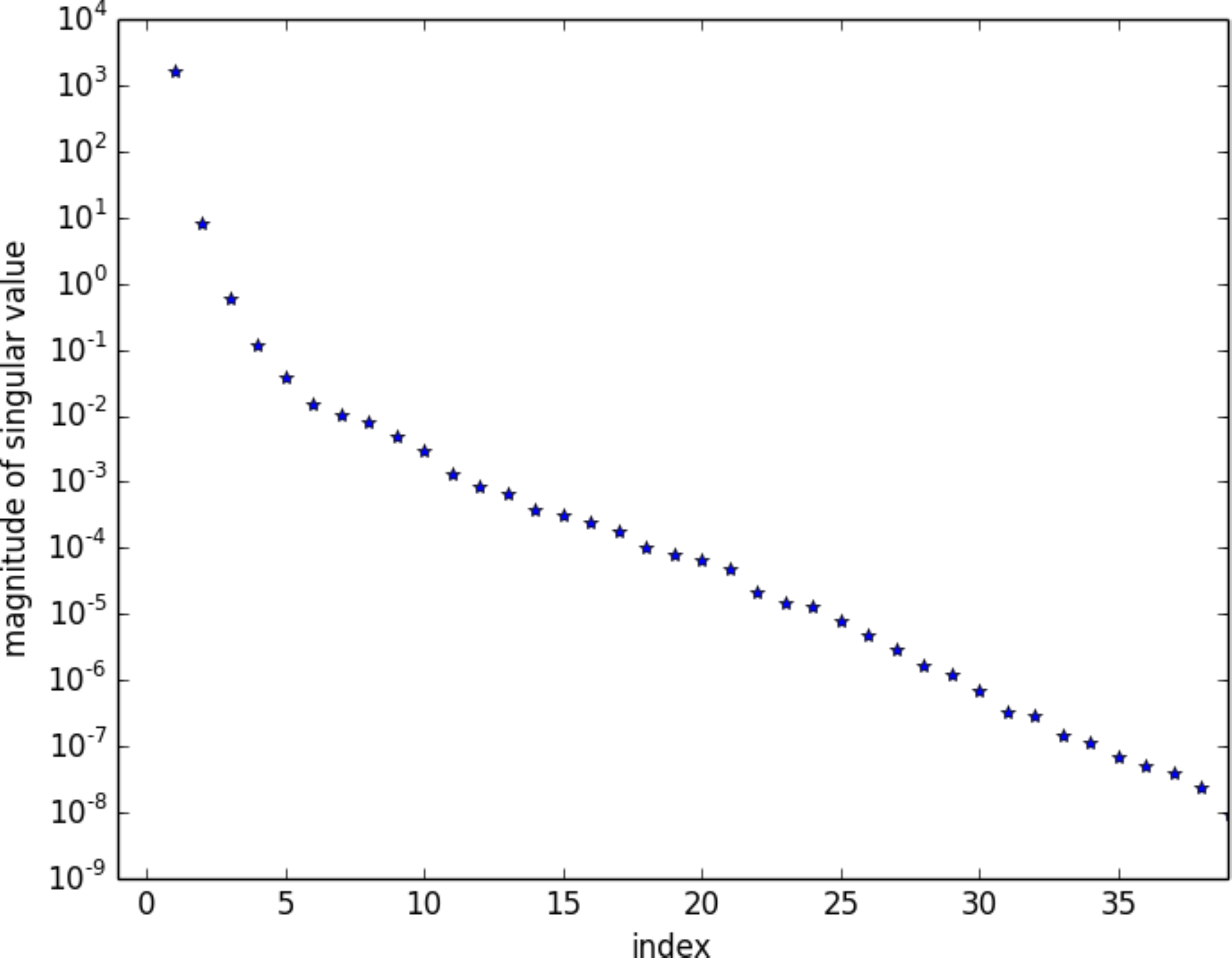} 
\caption{The 40 largest singular values of the snapshot matrix.}
\label{eigvaldecay}
\end{figure}

\subsection{Example 1}\label{datam}

The purpose of this example is to illustrate our theoretical error estimates and to show the efficacy of our method in a ``data mining'' setting, i.e., to show that we can accurately represent the information contained in the {\em En-full-FE} approximation which requires the specification of 16,457 coefficients by the {\em En-POD} approximation that requires the specification of a much smaller number of coefficients, in fact, merely 10 will do. Thus, we determine the {\em En-POD} approximation using the same perturbations, mesh, and time step as were used in the generation of the POD basis. We verify at each time step that condition \eqref{ineq:CFL-h} is satisfied. In order to illustrate the accuracy of our approach, we provide, in Figure \ref{noRomEx1}, plots of the velocity field of the ensemble average at the final time $T=5$ for both the {\em En-full-FE} and {\em En-POD} approximations. We also provide in Figure \ref{errorDiff} (left) the difference between the two ensemble averages at the final time $T = 5$. In addition, in Figure \ref{Energy_ex1}, we plot, for $0 \leq t \leq 5$ and for both methods, the energy $\frac{1}{2}\|u\|^{2}$ and the enstrophy $  \frac{1}{2}\nu\|\nabla \times u\|^{2}$.  

\begin{figure}[h!]
\centering
\includegraphics[height=5.5cm]{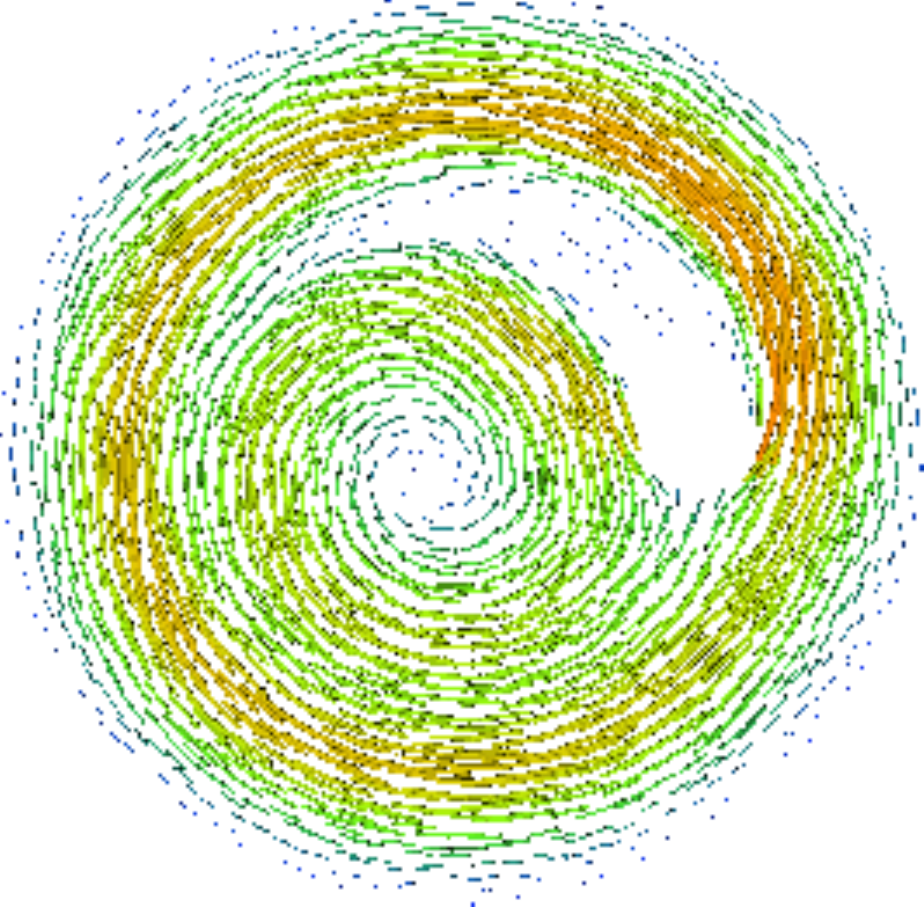}
\includegraphics[height=3cm]{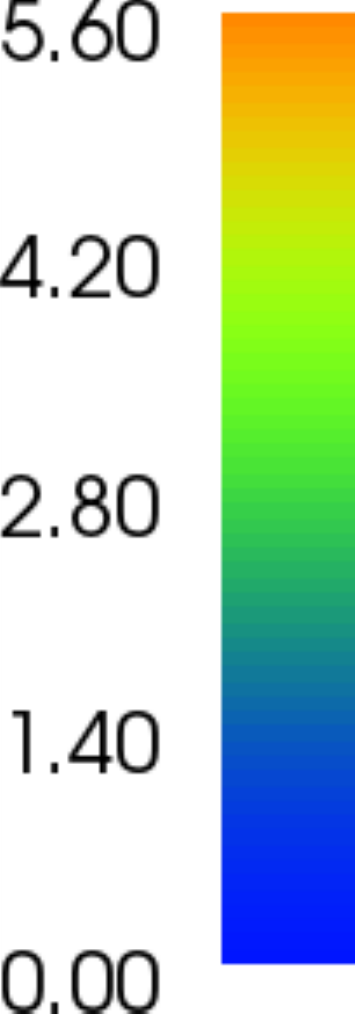} 
\includegraphics[height=5.5cm]{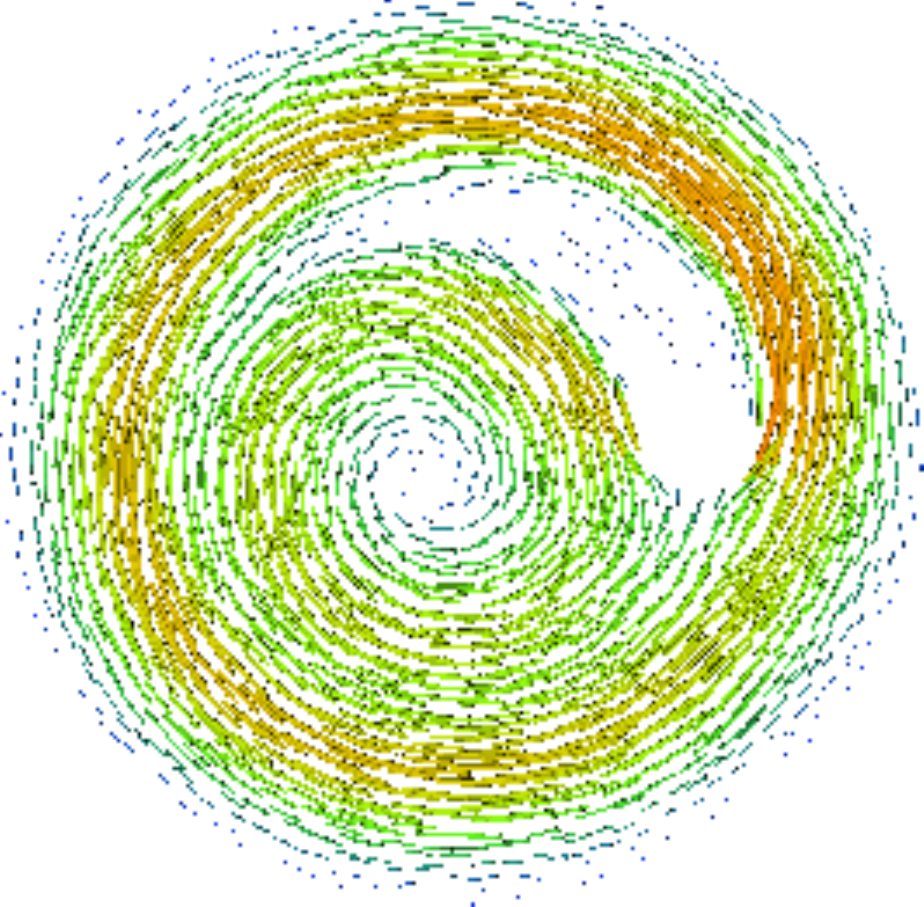} 
\caption{For Example 1, the ensemble average of the velocity field at the final time $T = 5$ of the En-full-FE (left) approximation and the En-POD approximation with $10$ reduced basis vectors (right).}
\label{noRomEx1}
\end{figure}

\begin{figure}[h!]
\centering
\includegraphics[height=5.5cm]{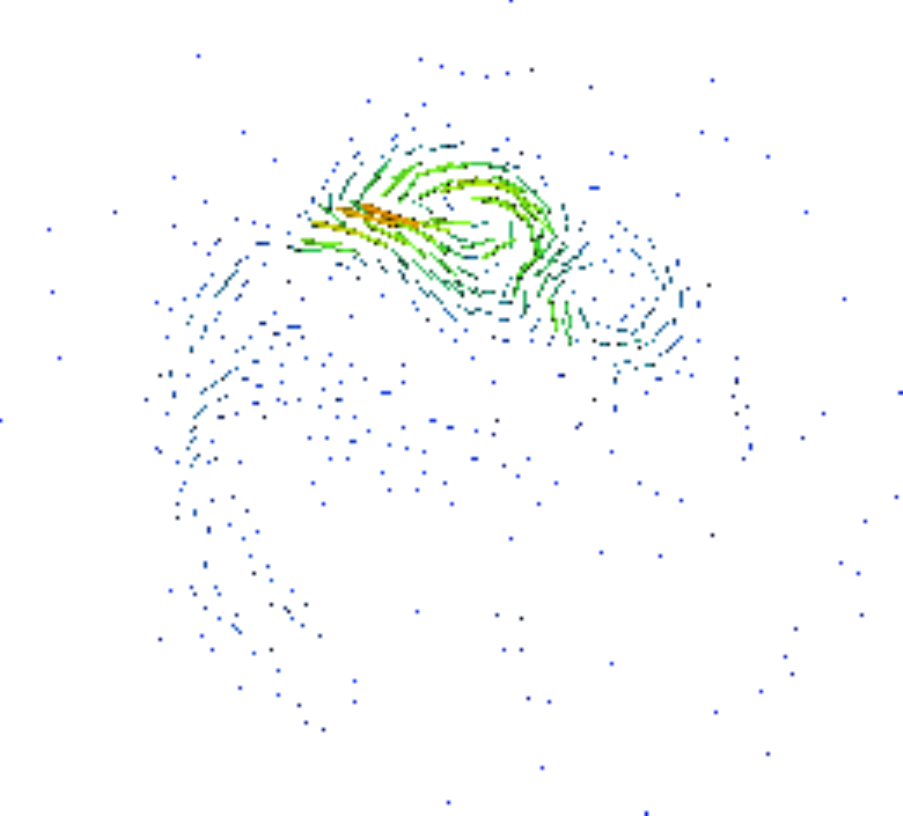}
\includegraphics[height=3cm]{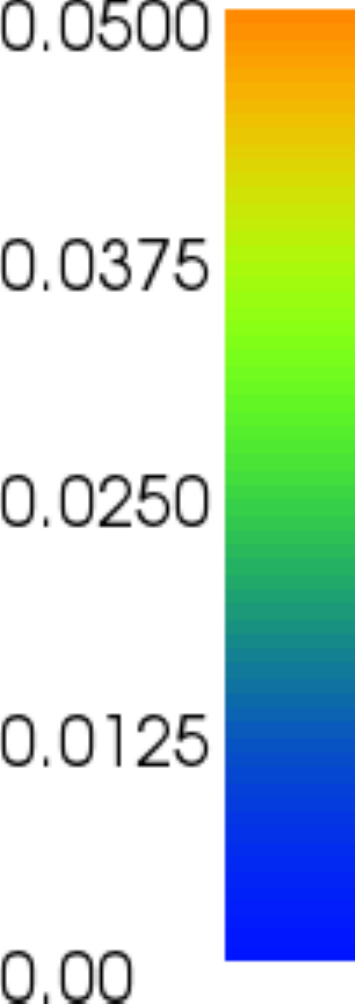} 
\caption{For Example 1, The difference between the ensemble average of the velocity field at the final time $T = 5$ of the En-full-FE (approximation and the En-POD approximation with $10$ reduced basis vectors.}
\label{errorDiff}
\end{figure}

We need $10$ POD basis functions to reproduce the flow with a reasonable level of accuracy. This is seen in Table \ref{tabEx1}(a) which shows a small discrete $L^{2}$ error corresponding to 10 basis vectors and, as the number of basis vectors increases beyond that, the error appears to decreases monotonically. Visual confirmation is given by comparing the two plots in Figure \ref{noRomEx1} as well as Figure \ref{errorDiff} (left); at time $T = 5$ the {\em En-POD} method appears to produce a flow which is very similar to that for the {\em En-full method}. Additionally, in Figure \ref{Energy_ex1}, we plot the energy and enstrophy of {\em En-POD} with varying cardinalities for the POD basis and for the  {\em En-full-FE} method. It can be seen that as the number of POD basis vectors increases our approximation improves with the {\em En-POD} energy and enstrophy becoming indistinguishable from that for the {\em En-full-FE}  for $10$ or more POD basis functions.

 \begin{table}[h!]
   \caption{F for Examples 1 and 2, the $L^{2}$ relative error $||u_{h}^{ave}- u_{R}^{ave}||_{2,0}$ vs. the dimension $R$ of the POD approximation.}\label{tabEx1}
\begin{center}
  \begin{tabular}{|c|c|c|c|c|c|c|c|}
     \multicolumn{2}{c}{\footnotesize(a) \em Example 1} &\multicolumn{1}{c}{}&
    \multicolumn{2}{c}{\footnotesize(b) \em Example 2}
     &\multicolumn{1}{c}{}\\
       \cline{1-2}  \cline{4-5}  
    $R$ & error && $R$ & error   \\ \cline{1-2}  \cline{4-5}   
    2 &  0.042157&& 2 & 0.042418   \\  \cline{1-2}  \cline{4-5}  
    4 &  0.019224&& 4 & 0.019347 \\  \cline{1-2}  \cline{4-5} 
    6 &  0.035701&&  6 & 0.035804    \\  \cline{1-2}  \cline{4-5}  
    8 &  0.064799&&  8 & 0.064946   \\  \cline{1-2}  \cline{4-5} 
    10 & 0.004741&&  10 & 0.004923  \\  \cline{1-2}  \cline{4-5}  
    12 & 0.003565&& 12 & 0.003803      \\  \cline{1-2}  \cline{4-5}  
    14 & 0.002979&&  14 & 0.003217    \\  \cline{1-2}  \cline{4-5}  
    16 & 0.002490&& 16 & 0.0028368     \\  \cline{1-2}  \cline{4-5}  
    18 & 0.001952&& 18 & 0.002430    \\  \cline{1-2}  \cline{4-5}   
    20 & 0.001035& & 20 & 0.001610   \\
 \cline{1-2}    \cline{4-5}
  \end{tabular}
\end{center}
\end{table}

\begin{figure}[h!]
\centering
\includegraphics[width=6.cm]{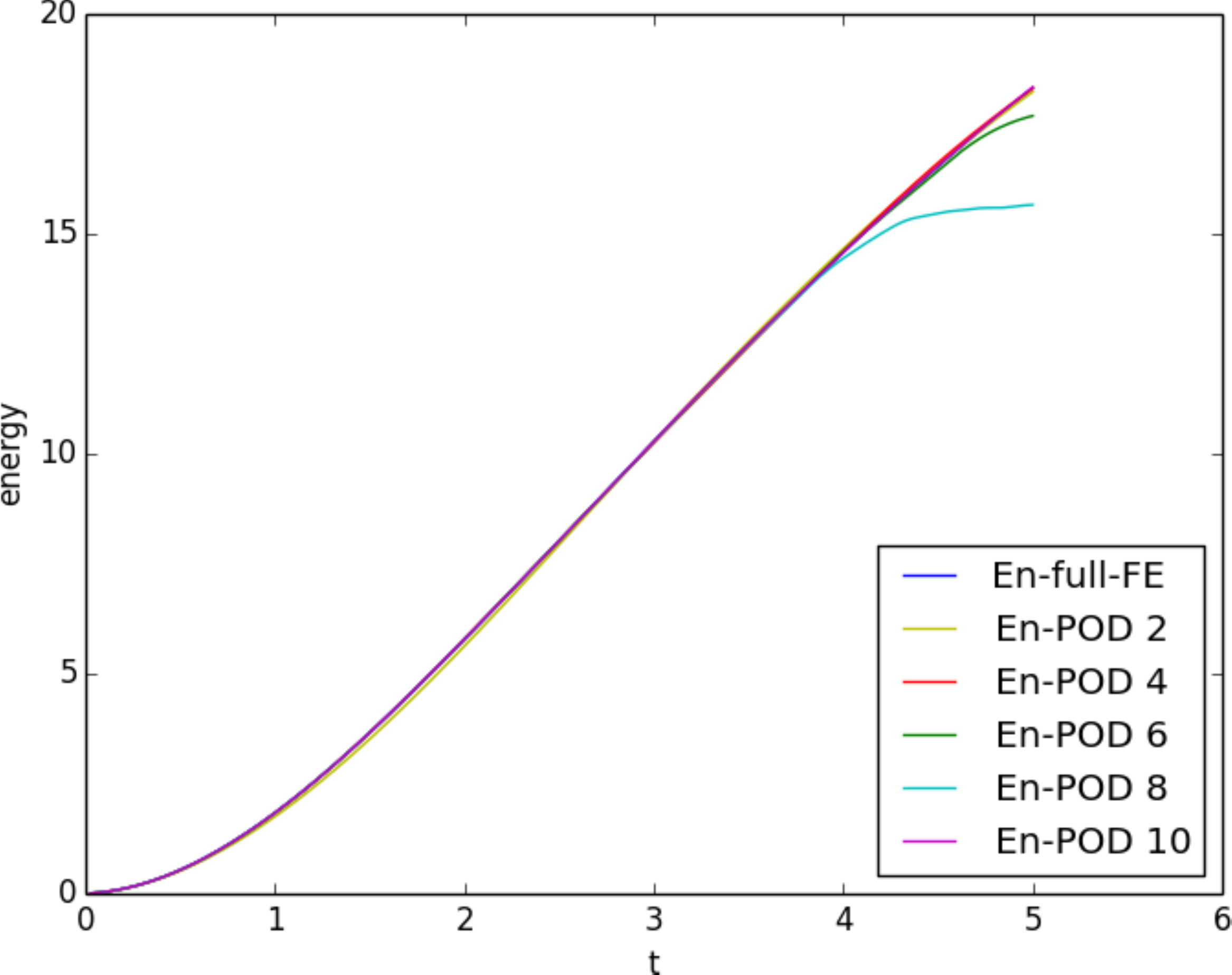}
\includegraphics[width=6.cm]{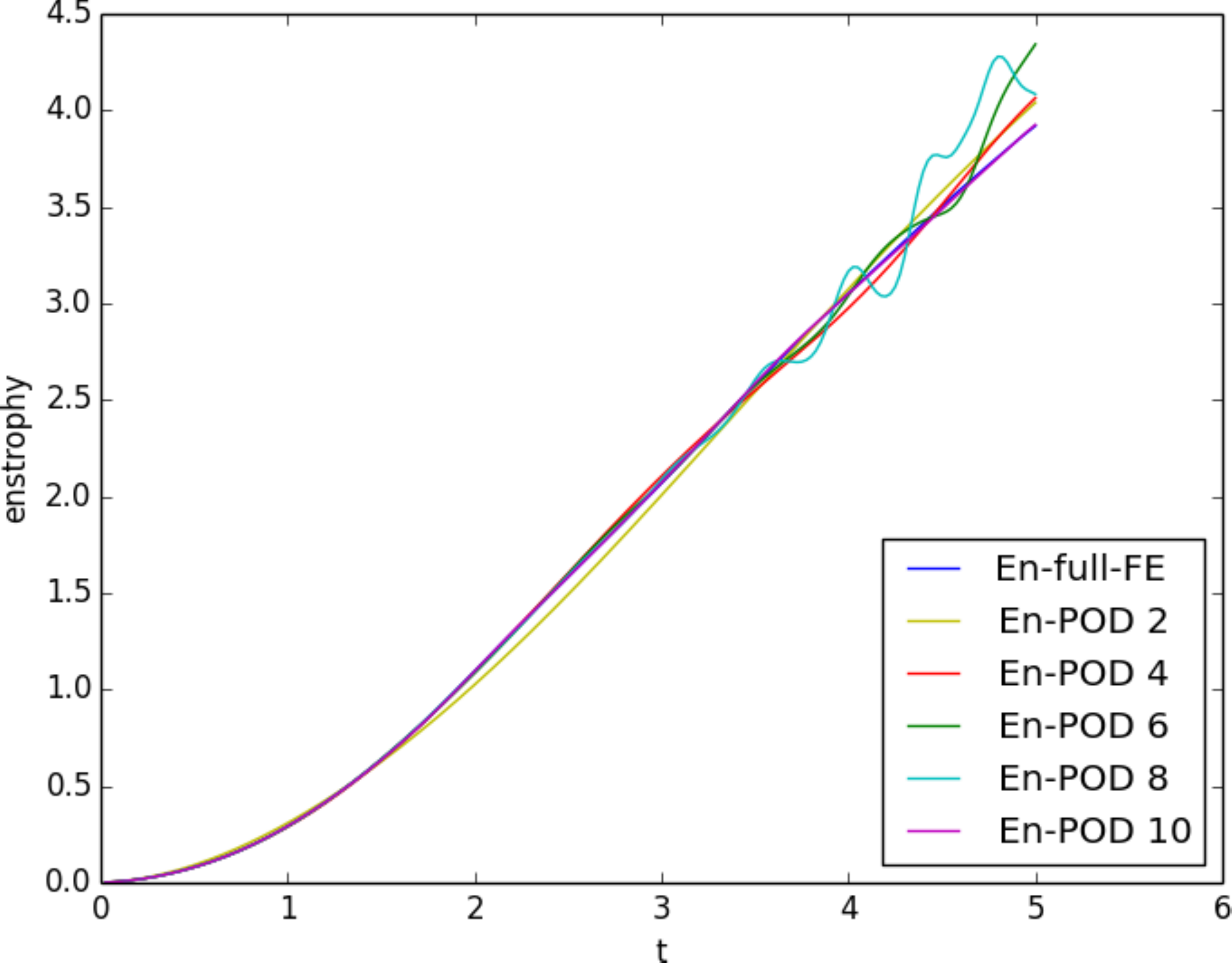}  
\caption{For Example 1 and for $0 \leq t \leq 5$, the energy (left) and enstrophy (right) of the ensemble determined for the En-full-FE approximations and for the En-POD approximation of several dimensions.}
\label{Energy_ex1}
\end{figure}

\subsection{Example 2}
Of course, the approximation of solutions of PDEs using reduced-order models such as POD are used not in the context of Section \ref{datam}, but, in our setting, for values of the perturbation parameter $\epsilon$ different from those used to generate the reduced-order basis. Thus, we consider the problem described in Section \ref{twoc} except that now we apply the {\em En-POD} method, using the basis generated as described in Section \ref{twoc}, for the two ensemble values $\epsilon_{1} = 0.1$ and $\epsilon_{2} = 1.0$, both of which are different from the values used to generate the snapshots used to construct the POD basis. For comparison purposes, we also determine the {\em En-full-FE} approximation for this ensemble. Note that these two values of $\epsilon$ take us to an {\em extrapolatory setting}, i.e., these values are outside of the interval $[-10^{-3},10^{-3}]$ bracketed by the values of $\epsilon$ used to generate the POD basis. Using a reduced-order method in an extrapolatory setting is usually a stern test of its efficacy.

The results for this ensemble are given in Table \ref{tabEx1}(b) and Figures \ref{noRomEx2}, \ref{errorDiff2}, and \ref{Energy_ex2}. The discussion in Section \ref{datam} corresponding to Example 1 carries over to this example except that the magnitude of the error is slightly larger; compare Table \ref{tabEx1}(a) and Table \ref{tabEx1}(b).

\begin{figure}[h!]
\centering
\includegraphics[height=5.5cm]{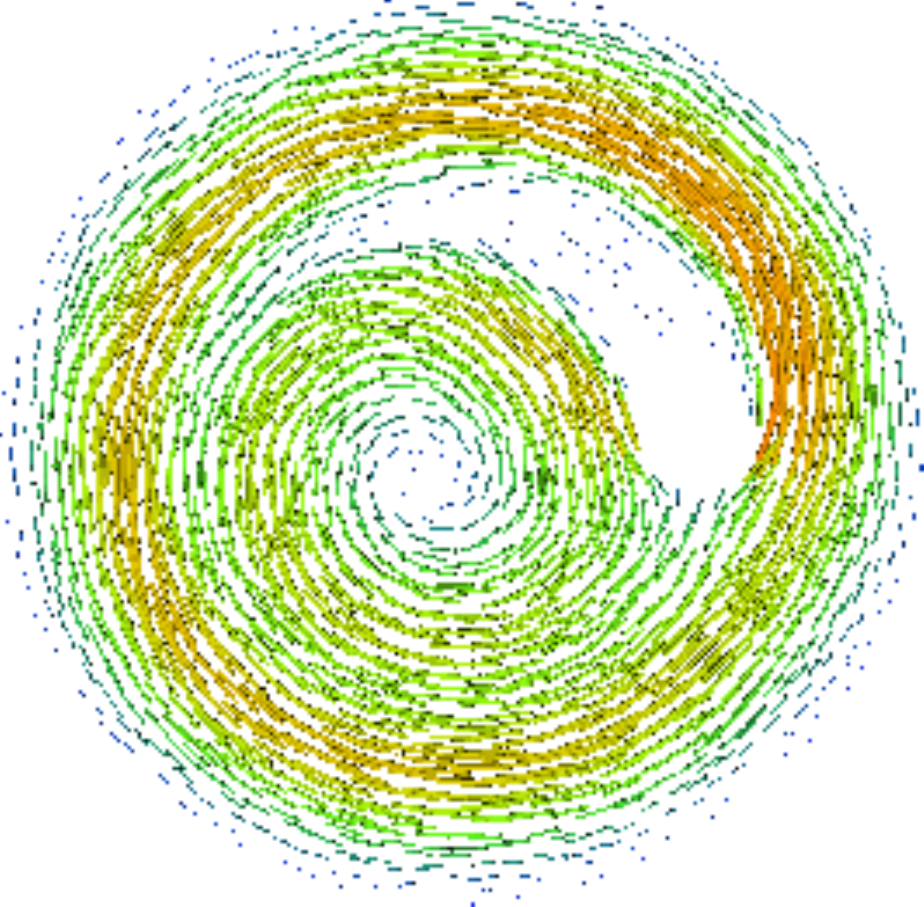}
\includegraphics[height=3cm]{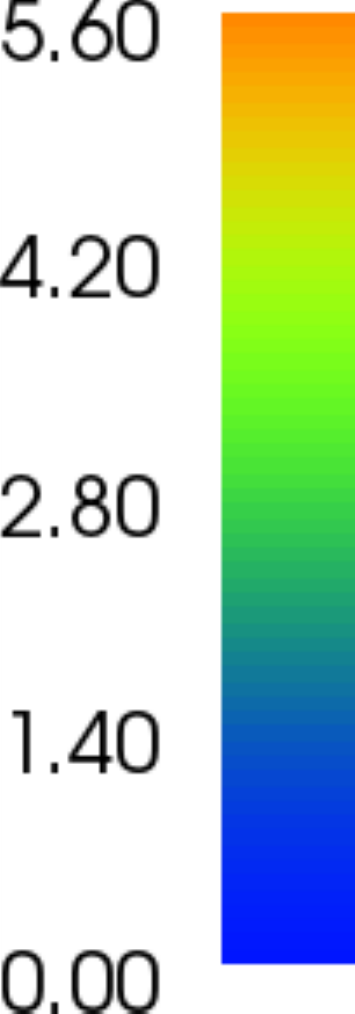} 
\includegraphics[height=5.5cm]{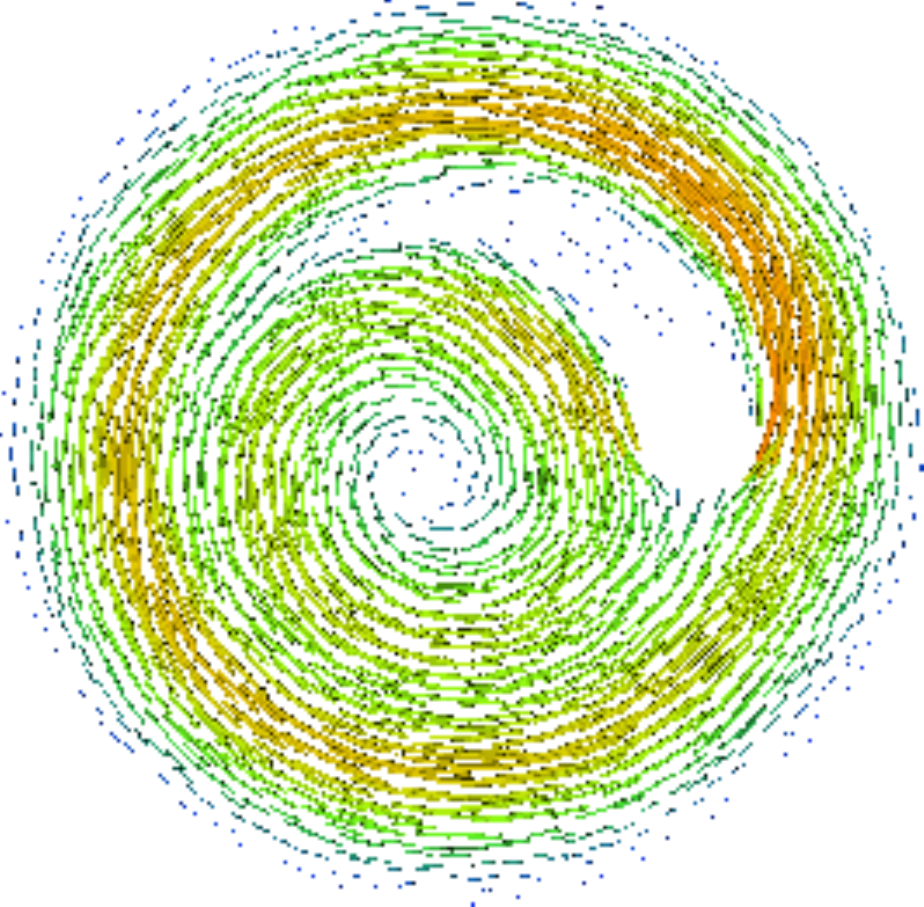} 
\caption{For Example 2, the ensemble average of the velocity field at the final time $T = 5$ of the En-full-FE (left) approximation and the En-POD approximation with $10$ reduced basis vectors (right).}
\label{noRomEx2}
\end{figure}

\begin{figure}[h!]
\centering
\includegraphics[height=5.5cm]{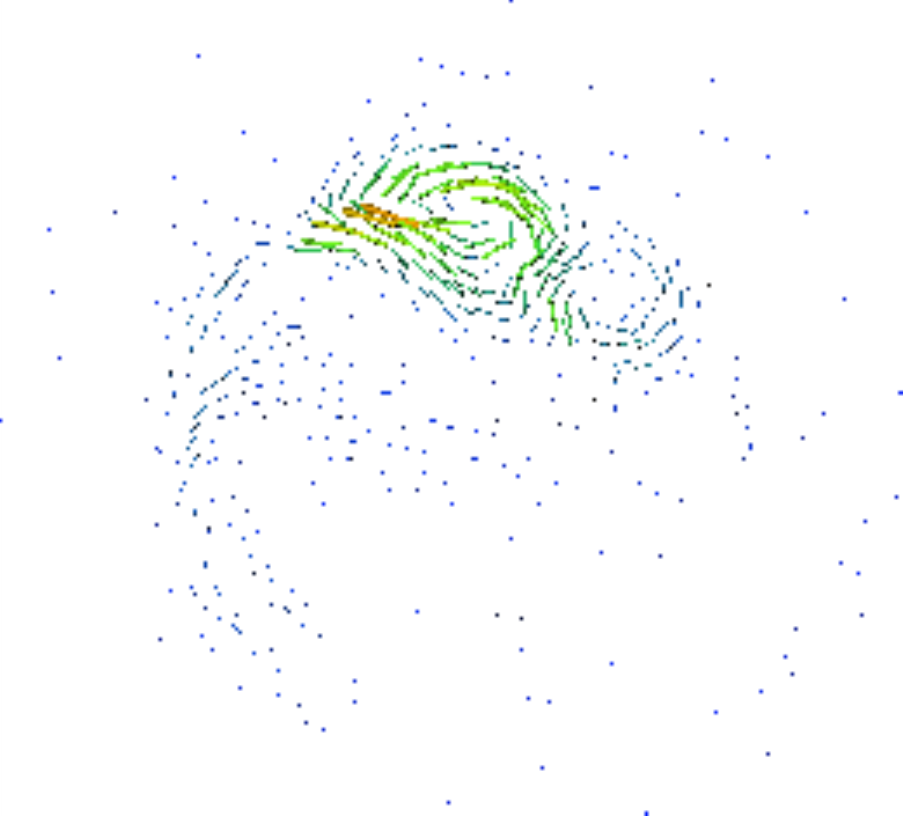}
\includegraphics[height=3cm]{scale2.pdf} 
\caption{For Example 2, the difference between the ensemble average of the velocity field at the final time $T = 5$ of the En-full-FE (approximation and the En-POD approximation with $10$ reduced basis vectors.}
\label{errorDiff2}
\end{figure}

\begin{figure}[h!]
\centering
\includegraphics[width=6.cm]{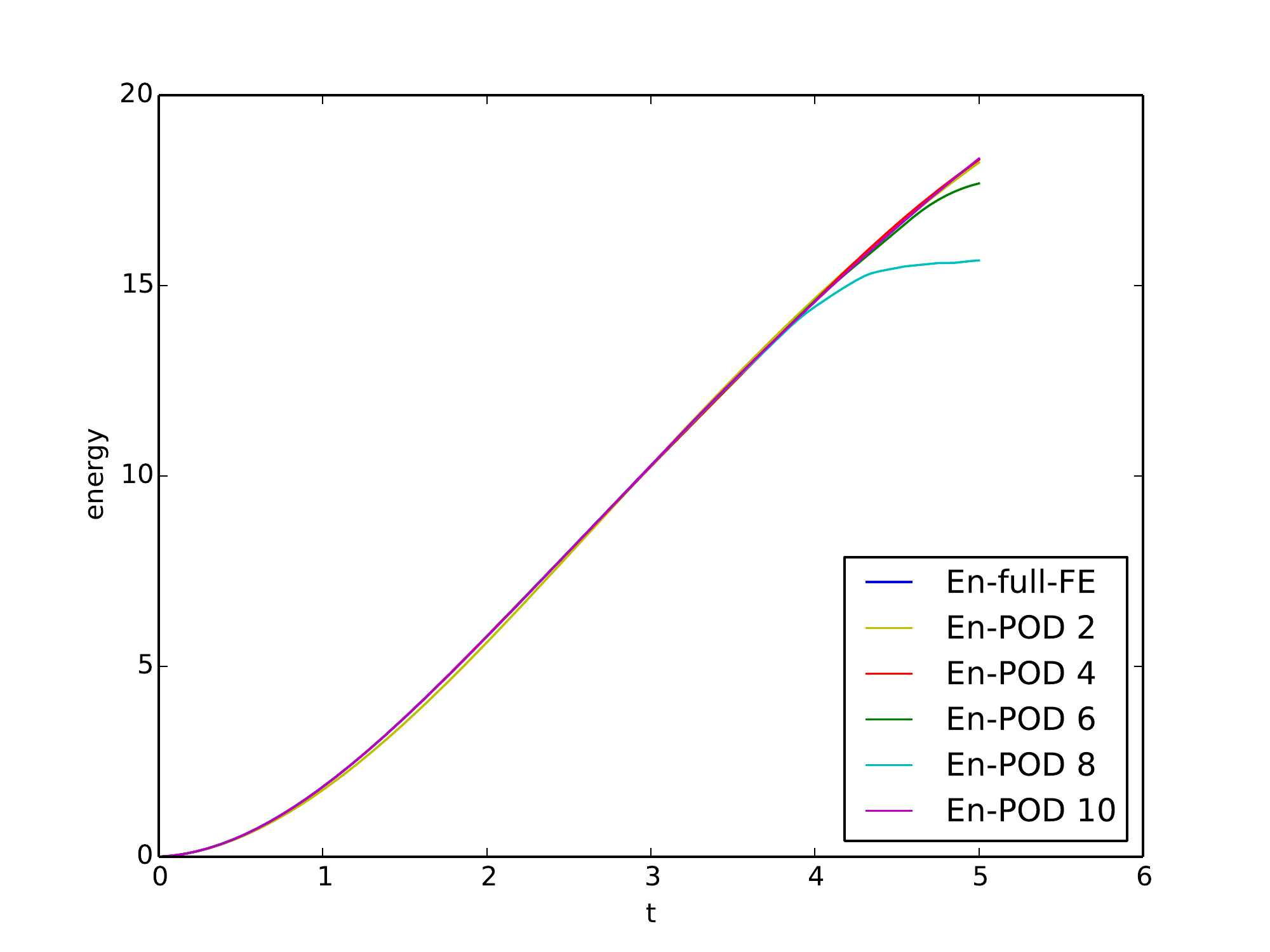}
\includegraphics[width=6.cm]{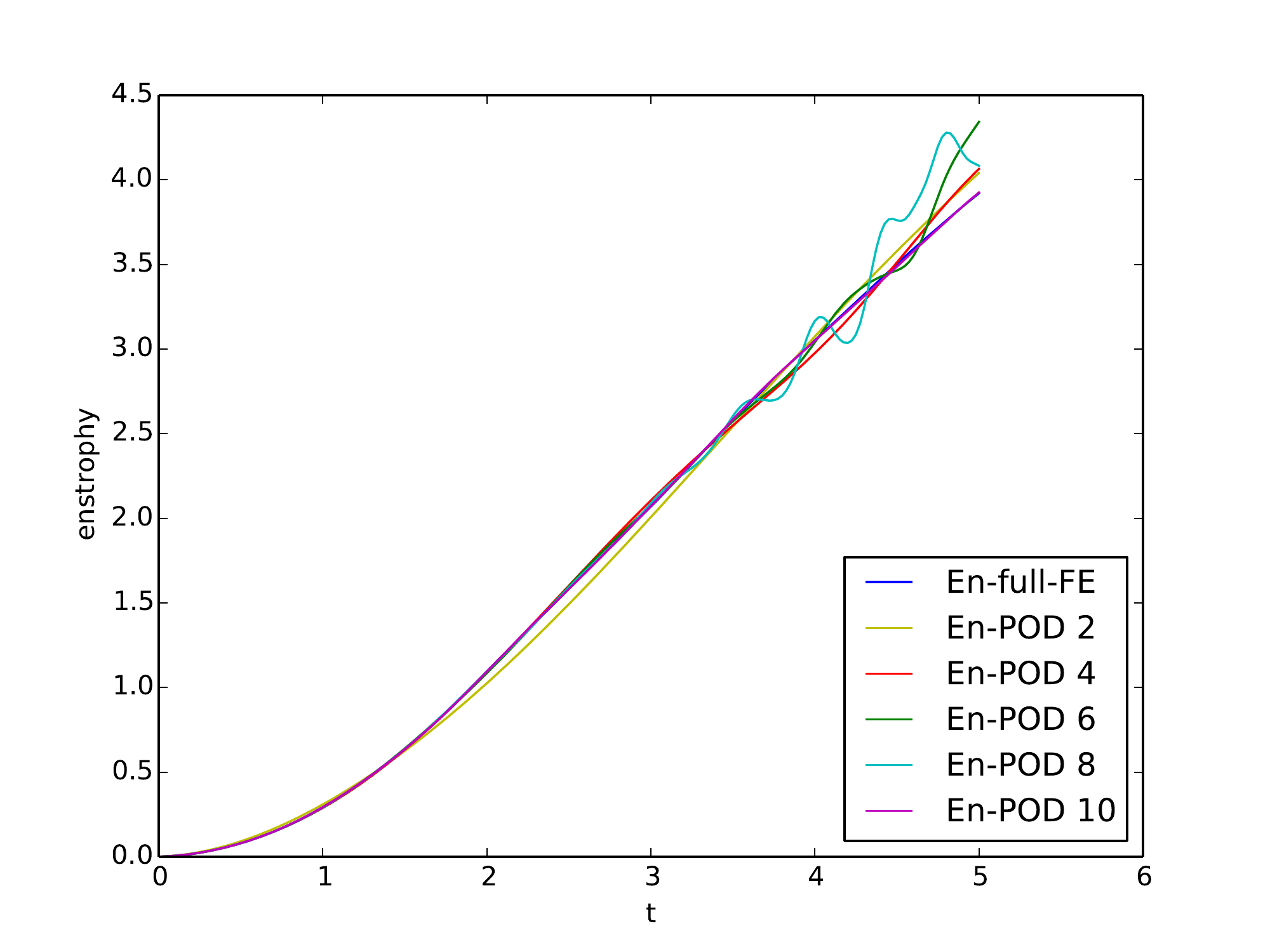}  
\vspace{-.1in}
\caption{For Example 2 and for $0 \leq t \leq 5$, the energy (left) and enstrophy (right) of the ensemble determined for the En-full-FE approximations and for the En-POD approximation of several dimensions.}
\label{Energy_ex2}
\end{figure}

\section{Concluding remarks}
In this work, an ensemble-proper orthogonal decomposition method for the nonstationary Navier-Stokes equations is proposed and analyzed. This method is built on a recently developed ensemble method that allows for the efficient determination of the multiple solutions of NSE. By incorporating the proper orthogonal decomposition technique, the ensemble-POD method introduced here significantly reduces the computational cost compared with that for the original ensemble method.

The method presented herein only works with low Reynolds number flows because the stability condition degrades quickly as the Reynolds number increases. To handle high Reynolds number flows, one has to consider incorporating regularization techniques. For single Navier-Stokes solves, there is existing in vast literature in this regard, but, in the ensemble setting, regularization has barely been studied. The only existing works are in \cite{J15,JL15}. The study of regularization methods in the ensemble and ensemble-POD setting is a focus of our current research.

We also note that in certain applications it may be desirable to construct a reduced basis for the pressure. We did not consider this in this work; doing so would require some sort of stabilization, such as the supremer stabilization introduced in \cite{BMQR15}, to compensate for the newly introduced LBB type condition. The incorporation of this type of method into the framework developed in this paper is also a subject of future research.

\appendix

\section{Proof or Theorem \ref{th:En-POD}}\label{app1}

{\allowdisplaybreaks

As the only difference is the choice of basis functions, we follow closely the proof of \cite[Theorem 1 (Stability of BEFE-Ensemble)]{JL14}.
Setting $\varphi=u_{R}^{j,n+1}$ in (\ref{En-POD-Weak}) and applying the Cauchy-Schwarz and Young inequalities to the right-hand side yields
\begin{equation}\label{ineq: tri}
\begin{aligned}
&\frac{1}{2}\|  u_{R}^{j,n+1}\| ^{2}-\frac{1}{2}\|  u_{R}^{j,n}\| 
^{2}+\frac{1}{2}\|  u_{R}^{j,n+1}-u_{R}^{j,n}\| ^{2}+ \nu\Delta t \| \nabla u_{R}^{j,n+1}\| ^{2}
\\&
+ \Delta t b^{*}(u_{R}^{j, n}-<u_R>^n,u_{R}^{j,n},u_{R}^{j,n+1}-u_{R}
^{j,n})\\&
\leq\frac{\nu\Delta t}{2} \| \nabla u_{R}^{j,n+1} \| ^{2}+ \frac{\Delta
t}{2 \nu} \|  f^{j,n+1}\| _{-1}^{2}.
\end{aligned}
\end{equation}
Next, we bound the trilinear term using the Poincar$\acute{e}$ inequality, Lemma \ref{lm:skew} and the inverse inequality \eqref{lm:inverse}:

\begin{equation}\label{Stab1}
\begin{aligned}
-  b^{*}&(u_{R}^{j, n}-<u_R>^n,u_{R}^{j,n},u_{R}^{j,n+1}-u_{R}
^{j,n})\\
&\leq C\| \nabla (u_{R}^{j, n}-<u_R>^n)\| \| \nabla u_{R}
^{j,n}\| \sqrt{\|  u_{R}^{j,n+1}-u_{R}^{j,n}\| \|  \nabla (u_{R}^{j,n+1}-u_{R}^{j,n})\|  }\\
&\qquad+\frac{1}{2}C\| \nabla\cdot (u_{R}^{j, n}-<u_R>^n)\| \| 
u_{R}^{j,n}\cdot(u_{R}^{j,n+1}-u_{R}^{j,n})\| \\
&\leq C\| \nabla (u_{R}^{j, n}-<u_R>^n)\| \| \nabla u_{R}
^{j,n}\| \sqrt{\|  u_{R}^{j,n+1}-u_{R}^{j,n}\| \|  \nabla (u_{R}^{j,n+1}-u_{R}^{j,n})\| }\\
&+\frac{1}{2}C\| \nabla\cdot (u_{R}^{j,n}-<u_R>^n)\| \| \nabla
u_{R}^{j,n}\| \sqrt{\|  u_{R}^{j,n+1}-u_{R}^{j,n}\|  \| 
\nabla(u_{R}^{j,n+1}-u_{R}^{j,n})\| }\\
&\leq C\| \nabla (u_{R}^{j, n}-<u_R>^n)\| \| \nabla u_{R}
^{j,n}\|  \|\hspace{-1pt}|  {\mathbb S}_R \|\hspace{-1pt}| _2^{1/4} \|\hspace{-1pt}|  {\mathbb M}_R^{-1} \|\hspace{-1pt}| _2^{1/4}\|  u_{R}^{j,n+1}-u_{R}^{j,n}\| \\
&+\frac{1}{2}C\| \nabla (u_{R}^{j, n}-<u_R>^n)\| \| \nabla
u_{R}^{j,n}\| \|\hspace{-1pt}|  {\mathbb S}_R \|\hspace{-1pt}| _2^{1/4} \|\hspace{-1pt}|  {\mathbb M}_R^{-1} \|\hspace{-1pt}| _2^{1/4}\|  u_{R}^{j,n+1}-u_{R}^{j,n}\| .
\end{aligned}
\end{equation}
By construction, the POD basis functions are orthonormal with respect to the $L^2(\Omega)$ inner product so that $\|\hspace{-1pt}|  {\mathbb M}_R \|\hspace{-1pt}| _2 = \|\hspace{-1pt}|  {\mathbb M}_R^{-1} \|\hspace{-1pt}| _2=1$. Then, \eqref{Stab1} reduces to 
$$
\begin{aligned}
-  b^{*}(u_{R}^{j, n}-&<u_R>^n,u_{R}^{j,n},u_{R}^{j,n+1}-u_{R}^{j,n})\\&
\leq C\| \nabla (u_{R}^{j,n}-<u_R>^n)\| \| \nabla u_{R}
^{j,n}\|  \|\hspace{-1pt}|  {\mathbb S}_R \|\hspace{-1pt}| _2^{1/4}\|  u_{R}^{j,n+1}-u_{R}^{j,n}\| .
\end{aligned}
$$
Using Young's inequality again results in
$$
\begin{aligned}
- \Delta t b^{*}(&u_{R}^{j, n}-<u_R>^n,u_{R}^{j,n},u_{R}^{j,n+1}-u_{R}
^{j,n})\\&
\leq C \Delta t^{2}\|\hspace{-1pt}|  {\mathbb S}_R \|\hspace{-1pt}| _2^{1/2} \| \nabla (u_{R}^{j, n}-<u_R>^n)\| ^{2}
\| \nabla u_{R}^{j,n}\| ^{2}+ \frac{1}{4}\|  u_{R}^{j,n+1}-u_{R}
^{j,n}\| ^{2}.
\end{aligned}
$$
Combining with (\ref{ineq: tri}) and then adding and subtracting $\frac{\nu\Delta t}{4}\| \nabla u_{R}^{j,n}\| ^{2}$ results in
$$
\begin{aligned}
\frac{1}{2}\|  u_{R}^{j,n+1}\| ^{2}-\frac{1}{2}\|  u_{R}^{j,n}\| 
^{2}&+\frac{1}{4}\|  u_{R}^{j,n+1}-u_{R}^{j,n}\| ^{2}+ \frac{\nu\Delta
t}{4}\Big\{\| \nabla u_{R}^{j,n+1}\| ^{2}-\| \nabla u_{R}^{j,n}\| 
^{2}\Big\}\\
+\frac{\nu\Delta t}{4}\Big\{\| \nabla u_{R}^{j,n+1}\| ^{2}&+\big(1-C \Delta
t {\nu}^{-1} \|\hspace{-1pt}|  {\mathbb S}_R \|\hspace{-1pt}| _2^{1/2}\|\nabla ( u_{R}^{j, n}-<u_R>^n)\| ^{2}\big) \| \nabla
u_{R}^{j,n}\| ^{2}\Big\} \\
&\leq\frac{\Delta t}{2 \nu} \|  f^{j,n+1}\| 
_{-1}^{2}.
\end{aligned}
$$
Assuming that the restriction (\ref{ineq:CFL-h}) holds, we have
$$
\frac{\nu\Delta t}{4}\big(1-C \Delta t {\nu}^{-1}\|\hspace{-1pt}|  {\mathbb S}_R \|\hspace{-1pt}| _2^{1/2}\| \nabla (u_{R}^{j,n}-<u_R>^n)\| ^{2}\big) \| \nabla u_{R}^{j,n}\| ^{2} \geq0.
$$
Combining the last two results then yields
$$
\begin{aligned}
\frac{1}{2}\|  u_{R}^{j,n+1}\| ^{2}&-\frac{1}{2}\|  u_{R}^{j,n}\| 
^{2}+\frac{1}{4}\|  u_{R}^{j,n+1}-u_{R}^{j,n}\| ^{2}
\\&
+ \frac{\nu\Delta t}{4}\Big\{\| \nabla u_{R}^{j,n+1}\| ^{2}-\| \nabla
u_{R}^{j,n}\| ^{2}\Big\} +\frac{\nu\Delta t}{4}\| \nabla u_{R}^{j,n+1}
\| ^{2} \leq\frac{\Delta t}{2 \nu} \|  f^{j,n+1}\| _{-1}^{2}.
\end{aligned}
$$
Summing up the above inequality results in \eqref{Stab:result}.
}

\section{Proof of Lemma \ref{lm:Projerr}}\label{app2}

{\allowdisplaybreaks

By \eqref{eq:def_proj} and the Cauchy-Schwarz inequality, we have 
$$
\begin{aligned}
\|  u^{m}-\Pi_R u^m\| ^2&= (u^{m}-\Pi_R u^m, u^{m}-\Pi_R u^m)\\
&=(u^{m}-\Pi_R u^m, u^{m}- \varphi) + (u^{m}-\Pi_R u^m,  \varphi-\Pi_R u^m)\\
&= (u^{m}-\Pi_R u^m, u^{m}- \varphi)\leq  \|  u^{m}-\Pi_R u^m \|  \|  u^{m}- \varphi\| 
\qquad\forall \varphi\in X_R
\end{aligned}
$$
so that
$$
\|  u^{m}-\Pi_R u^m\|  \leq  \|  u^{m}- \varphi\| \qquad\forall \varphi\in X_R. 
$$
We rewrite $u^{m}-\varphi=(u^m-u_S^{j,m})+(u_S^{j,m}-u^{j,m}_{h,S})+(u^{j,m}_{h,S}-\varphi)$ for all  $j=1,\ldots,J_S$. Setting $\varphi=\Pi_R u_{h,S}^{j,m}= \sum_{i=1}^R (u_{h,S}^{j,m}, \varphi_i)\varphi_i$ and using the triangle inequality as well as Lemma \ref{lm:L2err}, we have, for $j=1,\ldots,J_S$,
$$
\begin{aligned}
\frac{1}{N_S+1} &\sum_{m=0}^{N_S} \|  u^{m}-\Pi_R u^m\| ^2
\\&\leq \frac{1}{N_S+1} \sum_{m=0}^{N_S} (\|  u^{m}-u_S^{j,m}\| +\|  u_S^{j,m}-u_{h,S}^{j,m}\| +\|  u_{h,S}^{j,m}-\Pi_R u_{h,S}^{j,m}\| )^2
\\&
\leq \frac{2}{N_S+1} \sum_{m=0}^{N_S} \|  u^{m}-u_S^{j,m}\| ^2+C\Big( h^{2s+2} + \triangle t^4 \Big )+ 2J_S\sum_{i=R+1}^{J_S(N_S+1)} \lambda_i 
\end{aligned}
$$
from which \eqref{pel2} easily follows.
Similarly, by using Lemmas \ref{lm:inverse} and \ref{lm:H1err}, we have 
\begin{align*}
\frac{1}{N_S+1} &\sum_{m=0}^{N_S} \|  \nabla (u^{m}-\Pi_R u^m)\| ^2\\
&\leq \frac{1}{N_S+1} \sum_{m=0}^{N_S} \Big(\|  \nabla ( u^{m}-u_S^{j,m})\| +\|  \nabla (u_S^{j,m}-u_{h,S}^{j,m})\| 
\\&
\qquad +\|  \nabla (u_{h,S}^{j,m}-\Pi_R u_{h,S}^{j,m})\| +\|  \nabla (\Pi_R u_{h,S}^{j,m}-\Pi_R u^m)\|  \Big)^2\\
&\leq \frac{2}{N_S+1} \sum_{m=0}^{N_S} \|  \nabla ( u^{m}-u_S^{j,m})\| ^2+C \big ( h^{2s}+\triangle t^4 \big ) 
+2J_S\sum_{i=R+1}^{J_S(N_S+1)} \|  \nabla \varphi_i\| ^2\lambda_i \\
&\qquad+2\|\hspace{-1pt}|  {\mathbb S}_R \|\hspace{-1pt}| _2\frac{1}{N_S+1}\sum_{m=0}^{N_S} \Big (\|  \Pi_R u_{h,S}^{j,m}-\Pi_R u_S^{j,m}\| ^2+\|  \Pi_R u_S^{j,m}-\Pi_R u^m\| ^2 \Big )\\
&\leq \frac{2}{N_S+1} \sum_{m=0}^{N_S} \|  \nabla ( u^{m}-u_S^{j,m})\| ^2+C \big ( h^{2s}+\triangle t^4\big )
 +2 J_S\sum_{i=R+1}^{J_S(N_S+1)} \|  \nabla \varphi_i\| ^2\lambda_i \\
&\qquad +2\|\hspace{-1pt}|  {\mathbb S}_R \|\hspace{-1pt}| _2\frac{1}{N_S+1}\sum_{m=0}^{N_S} \Big (\|  u_{h,S}^{j,m}- u_S^{j,m}\| ^2+\|  u_S^{j,m}- u^m\| ^2 \Big )\\
&\leq \frac{2}{N_S+1} \sum_{m=0}^{N_S} \|  \nabla ( u^{m}-u_S^{j,m})\| ^2+C\Big( h^{2s} + \triangle t^4 \Big )
 +2J_S\sum_{i=R+1}^{J_S(N_S+1)} \|  \nabla \varphi_i\| ^2 \lambda_i \\
 &\qquad+2\|\hspace{-1pt}|  {\mathbb S}_R \|\hspace{-1pt}| _2 \Big (h^{2s+2} + \triangle t^4\Big )+\|\hspace{-1pt}|  {\mathbb S}_R \|\hspace{-1pt}| _2\frac{2}{N_S+1} \sum_{m=0}^{N_S} \|   u^{m}-u_S^{j,m} \| ^2
\end{align*}
from which \eqref{peh1} easily follows.
}

\section{Proof of Theorem \ref{th:errEn-POD}}\label{app3}

For $j=1,\cdots, J$, the true solutions of the NSE $u^{j}$ satisfies
{\allowdisplaybreaks
\begin{gather}
(\frac{u^{j,n+1}-u^{j,n}}{\Delta t}, \varphi)+ b^{*}(u^{j,n+1},
u^{j,n+1}, \varphi)+ \nu(\nabla u^{j, n+1}, \nabla v_{h})- (p^{j, n+1},\nabla\cdot \varphi)\label{eq:convtrue}\\
=(f^{j,n+1}, \varphi)+ Intp(u^{j,n+1};\varphi)\text{ , }\qquad\text{for any }
\varphi\in X_{R}\text{ ,}\nonumber
\end{gather}
where $Intp(u^{j,n+1};\varphi)$ is defined as
\[
Intp(u^{j,n+1};\varphi)=(\frac{u^{j,n+1}-u^{j,n}}{\Delta t}-u_{t}^j
(t^{n+1}),\varphi)\text{ .}
\]
Let
\begin{equation}
e^{j,n}=u^{j,n}-u_{R}^{j,n}=(u^{j,n}-\Pi_{R} u^{j,n})+(\Pi_{R} u^{j,n}-u_{R}^{j,n})=\eta^{j,n}+\xi_{R}^{j,n} \text{ ,\qquad} j=1,...,J\text{ ,}\nonumber
\end{equation}
where $\Pi_{R} u_{j}^{n} \in X_{R} $ is the $L^2$ projection of $u^{j,n}$
in $X_{R}.$ Subtracting (\ref{En-POD-Weak}) from (\ref{eq:convtrue}) gives
\begin{gather}
(\frac{\xi_{R}^{j,n+1}-\xi_{R}^{j,n}}{ \Delta t},\varphi) +\nu(\nabla\xi
_{R}^{j,n+1},\nabla \varphi)+b^{*}(u^{j,n+1},u^{j,n+1},\varphi)\nonumber\\
-b^{*}(<u_R>^n,u_{R}^{j,n+1},\varphi) -b^{*}(u_{R}^{j,n}-<u_R>^n,u_{R}^{j,n},\varphi)-(p^{j,n+1},\nabla\cdot \varphi)\label{eq:err}\\
=-(\frac{\eta^{j,n+1}-\eta^{j,n}}{\Delta t},\varphi) -\nu(\nabla\eta^{j,n+1},\nabla \varphi) +Intp(u^{j,n+1};\varphi)\text{ .}\nonumber
\end{gather}
Set $\varphi=\xi_{R}^{j,n+1}\in X_{R}$ and rearrange the nonlinear
terms. By the definition of the $L^2$ projection, we have $(\eta^{j,n+1}-\eta^{j,n}, \xi_{R}^{j,n+1})=0$. Thus \eqref{eq:err} becomes
\begin{gather}
\frac{1}{\Delta t}(\frac{1}{2}\|\xi_{R}^{j,n+1}\|^{2}-\frac{1}{2}\|\xi
_{R}^{j,n}\|^{2}+\frac{1}{2}\|\xi_{R}^{j,n+1}-\xi_{R}^{j,n}\|^{2})+\nu
\|\nabla\xi_{R}^{j,n+1}\|^{2}\nonumber\\
=-b^{*}(u^{j,n+1},u^{j,n+1},\xi_{R}^{j,n+1})+b^{*}(u_{R}^{j,n}
,u_{R}^{j,n+1},\xi_{R}^{j,n+1})\nonumber\\
-b^{*}(u_{R}^{j, n}-<u_R>^n,u_{R}^{j,n+1}-u_{R}^{j,n},\xi_{R}^{j,n+1})+(p^{j,n+1},\nabla\cdot\xi_{R}^{j,n+1})\label{eq:err1}\\
 -\nu
(\nabla\eta^{j,n+1},\nabla\xi_{R}^{j,n+1})+Intp(u^{j,n+1};\xi_{R}
^{j,n+1})\text{ .}\nonumber
\end{gather}
We rewrite the first two nonlinear terms on the right hand side of \eqref{eq:err1} as follows
\begin{equation}\label{eq:nonlinear}
\begin{aligned}
-b^{*}(u^{j,n+1},&u^{j,n+1},\xi_{R}^{j,n+1})+b^{*}(u_{R}^{j,n}
,u_{R}^{j,n+1},\xi_{R}^{j,n+1})\\
&= -b^{*}(e^{j,n}, u^{j,n+1}, \xi_{R}^{j,n+1})-
b^{*}(u_R^{j,n}, e^{j,n+1},\xi_{R}^{j,n+1})\nonumber\\
&\quad
-b^{*}(u^{j,n+1}-u^{j,n}, u^{j,n+1},\xi_{R}^{j,n+1})\nonumber\\
&=-b^{*}(\eta^{j,n}, u^{j,n+1},\xi_{R}^{j,n+1})
-b^{*}(\xi_{R}^{j,n}, u^{j,n+1},\xi_{R}^{j,n+1})\nonumber\\
&\quad-b^{*}(u_R^{j,n}, \eta^{j,n+1}, \xi_{R}^{j,n+1})
-b^{*}(u^{j,n+1}-u^{j,n}, u^{j,n+1},\xi_{R}^{j,n+1}).\nonumber
\end{aligned}
\end{equation}
Using the same techniques as in the proof of Theorem 5 of \cite{JL14}, with the assumption that $u^j \in L^{\infty}(0,T; H^1(\Omega))$, we have the following estimates on the nonlinear terms
\begin{equation}
\begin{aligned}
&b^{*}(u^{j,n+1}-u^{j,n},u^{j,n+1},\xi_{R}^{j,n+1})\\ 
&\leq\frac{\nu}{64}\|\nabla\xi_{R}^{j,n+1}\|^{2}+C\nu^{-1}\|\nabla(u
^{j,n+1}-u^{j,n})\|^{2}\|\nabla u^{j,n+1}\|^{2}\\
&\leq\frac{\nu}{64}\|\nabla\xi_{R}^{j,n+1}\|^{2}+\frac{C\Delta t}{\nu}
(\int_{t^{n}}^{t^{n+1}}\| \nabla u_{t}^j \|^{2} dt)\text{ , }
\end{aligned}
\end{equation}
and
\begin{equation}
\begin{aligned}
b^{*}(\eta^{j,n}, &u^{j,n+1}, \xi_{R}^{j,n+1}) 
\leq\frac{\nu}{64}\|\nabla\xi_{R}^{j,n+1}\|^{2}+C\nu^{-1}\|\nabla\eta^{j,n}
\|^{2} .
\end{aligned}
\end{equation}
Using Young's inequality, \eqref{In2} and the result \eqref{Stab:result} from the stability analysis, i.e., $\Vert \nabla u_R^{j,n}\Vert^2\leq C$, we have
\begin{equation}
\begin{aligned}
b^{*}(u^{j,n}_R,&\eta^{j,n+1},\xi_{R}^{j,n+1})\\
&\leq \Vert \nabla u_R^{j,n}\Vert^{1/2}\Vert u_R^{j,n}\Vert^{1/2}\Vert \nabla \eta^{j,n+1}\Vert \Vert \xi_R^{j,n+1}\Vert\\ 
&\leq\frac{\nu}{64}\|\nabla\xi_{R}^{j,n+1}\|^{2}+C\nu^{-1}\|\nabla u_R^{j,n}\|\|\nabla \eta^{j,n+1}\|^{2}.
\end{aligned}
\end{equation}
Using the inequality \eqref{In2}, Young's inequality and $u^j \in L^{\infty}(0,T; H^1(\Omega))$, we have
\begin{equation}
\begin{aligned}
b^{*}(\xi^{j,n}_{R}, &u^{j,n+1}, \xi_{R}^{j,n+1}) \\
&\leq C\| \nabla\xi
^{j,n}_{R} \|^{\frac{1}{2}} \| \xi^{j,n}_{R}\| ^{\frac{1}{2}}\|\nabla
u^{j,n+1}\|\|\nabla\xi_{R}^{j,n+1}\|\\
&\leq C(\epsilon\|\nabla\xi_{R}^{j,n+1}\|^{2}+\frac{1}{\epsilon}\|\nabla\xi
^{j,n}_{R} \|\|\xi^{j,n}_{R} \|)\\
&\leq C(\epsilon\|\nabla\xi_{R}^{j,n+1}\|^{2}+\frac{1}{\epsilon}(\delta
\|\nabla\xi^{j,n}_{R} \|^{2}+\frac{1}{\delta}\|\xi^{j,n}_{R} \|^2)\\
&\leq(\frac{\nu}{64} \|\nabla\xi_{R}^{j,n+1}\|^{2}+\frac{\nu}{8} \|\nabla
\xi^{j,n}_{R} \|^{2})+\frac{C}{\nu^{3}}\|\xi^{j,n}_{R} \|^{2} ,
\end{aligned}
\end{equation}
We next rewrite the third nonlinear term on the right-hand side of \eqref{eq:err1}:
\begin{equation}
\begin{aligned}
b^{*}(u_{R}^{j, n}-&<u_R>^n,u_{R}^{j,n+1}-u_{R}^{j,n},\xi_{R}^{j,n+1})\\
&=-b^{*}(u_{R}^{j, n}-<u_R>^n,e^{j,n+1}-e^{j,n},\xi_{R}^{j,n+1})\\
&\quad+b^{*}(u_{R}^{j,n}-<u_R>^n
,u^{j,n+1}-u^{j,n},\xi_{R}^{j,n+1})\\
&=-b^{*}(u_{R}^{j, n}-<u_R>^n,\eta^{j,n+1},\xi_{R}^{j,n+1})\\
&\quad+b^{*}(u_{R}^{j,n}-<u_R>^n,\eta
^{j,n},\xi_{R}^{j,n+1})\\
&\quad+b^{*}(u_{R}^{j,n}-<u_R>^n,\xi_{R}^{j,n},\xi_{R}^{j,n+1})\\
&\quad+b^{*}(u_{R}^{j,n}-<u_R>^n,u
^{j,n+1}-u^{j,n},\xi_{R}^{j,n+1})\text{ .}
\end{aligned}
\end{equation}
Following the same steps as in the proof of Theorem 5 of \cite{JL14}, we have the following estimates on the above nonlinear terms
\begin{equation}
\begin{aligned}
b^{*}(u_{R}^{j, n}-&<u_R>^n,\eta^{j,n+1},\xi_{R}^{j,n+1})\\ 
&\leq\frac{\nu}{64}\|\nabla\xi_{R}^{j,n+1}\|^{2}+C\nu^{-1}\|\nabla u_{R}
^{j, n}-<u_R>^n\|^{2}\|\nabla\eta^{j,n+1}\|^{2}\text{ ,}
\end{aligned}
\end{equation}
\begin{equation}
\begin{aligned}
b^{\ast}(u_{R}^{j,n}-&<u_R>^n,\eta^{j,n},\xi_{R}^{j,n+1})\\
&\leq\frac{\nu}{64}\| \nabla\xi_{R}^{j,n+1}\| ^{2}+C\nu^{-1}\| \nabla
u_{R}^{j, n}-<u_R>^n\| ^{2}\| \nabla\eta^{j,n}\| ^{2}\text{ .}
\end{aligned}
\end{equation}
By skew symmetry, Lemma \ref{lm:skew}, inequality \eqref{In1} and the inverse inequality \eqref{lm:inverse}, we have
\begin{equation}
\begin{aligned}
&b^{\ast}(u_{R}^{j, n}-<u_R>^n,\xi_{R}^{j,n},\xi_{R}^{j,n+1})\\
&\leq C\| \nabla u_{R}
^{j,n}-<u_R>^n\| \| \nabla\xi_{R}^{j,n+1}\| \sqrt{\| \xi_{R}^{j,n+1}-\xi_{R}
^{j,n}\| \|  \nabla (\xi_{R}^{j,n+1}-\xi_{R}
^{j,n})\| }\\
&+C\| \nabla\cdot (u_{R}^{j, n}-<u_R>^n)\| \| \xi_{R}^{j,n+1}\cdot
(\xi_{R}^{j,n+1}-\xi_{R}^{j,n})\| \\
&\leq C\| \nabla (u_{R}^{j,n}-<u_R>^n)\| \| \nabla\xi_{R}^{j,n+1}\| \sqrt{\| \xi_{R}^{j,n+1}-\xi_{R}
^{j,n}\| \|  \nabla (\xi_{R}^{j,n+1}-\xi_{R}
^{j,n})\| }\\
&\leq C\| \nabla (u_{R}^{j, n}-<u_R>^n)\| 
\| \nabla\xi_{R}^{j,n+1}\|  \|\hspace{-1pt}|  {\mathbb S}_R \|\hspace{-1pt}| _2^{1/4}\| \xi_{R}^{j,n+1}-\xi_{R}
^{j,n}\| \\
&\leq\frac{1}{4\triangle t}\| \xi_{R}^{j,n+1}-\xi_{R}^{j,n}\| ^{2}+\left(
C \triangle t \|\hspace{-1pt}|  {\mathbb S}_R \|\hspace{-1pt}| _2^{1/2}\| \nabla u_{R}^{j, n}-<u_R>^n\| ^{2}\right)  \| \nabla
\xi_{R}^{j,n+1}\| ^{2}.
\end{aligned}
\end{equation}
For the last nonlinear term we have
\begin{equation}
\begin{aligned}
b^{*}(u_{R}^{j, n}-&<u_R>^n,u^{j,n+1}-u^{j,n},\xi_{R}^{j,n+1}) \\
&\leq\frac{\nu}{64}\|\nabla\xi_{R}^{j,n+1}\|^{2}+C\nu^{-1}\|\nabla (u_{R}
^{j,n}-<u_R>^n)\|^{2}\|\nabla(u^{j,n+1}-u^{j,n})\|^{2}\\
&\leq\frac{\nu}{64}\|\nabla\xi_{R}^{j,n+1}\|^{2}+\frac{C \Delta t}{\nu}\|\nabla
(u_{R}^{j, n}-<u_R>^n)\|^{2}(\int_{t^{n}}^{t^{n+1}}\| \nabla u_{t}^j\|^{2} \text{
}dt)\text{ .}
\end{aligned}
\end{equation}
Next, consider the pressure term. Since $\xi_{R}^{j,n+1}\in X_{R}\subset V_h$
we have for $q_{h}\in Q_h$
\begin{equation}
\begin{aligned}
(p^{j,n+1},\nabla\cdot\xi_{R}^{j,n+1})&=(p^{j,n+1}-q_{h}^{n+1},
\nabla\cdot\xi_{R}^{j,n+1})\\
&\leq\frac{\nu}{64}\|\nabla\xi_{R}^{j,n+1}\|^{2}+C \nu^{-1}\|p
^{j,n+1}-q_{h}^{n+1}\|^{2} \text{ .}
\end{aligned}
\end{equation}
The other terms, are bounded as
\begin{equation}
\begin{aligned}
\nu(\nabla\eta^{j,n+1},\nabla\xi_{R}^{j,n+1}) 
\leq C\nu\|\nabla\eta^{j,n+1}\|^{2}+ \frac{\nu}{64}\|\nabla\xi_{R}
^{j,n+1}\|^{2} \text{ .}
\end{aligned}
\end{equation}
Finally,
\begin{equation}\label{lastineq}
\begin{aligned}
Intp(u^{j,n+1};\xi_{R}^{j,n+1})&=(\frac{u^{j,n+1}-u^{j,n}}{\Delta
t}-u_{t}^j(t^{n+1}),\xi_{R}^{j,n+1})\\
&\leq\frac{\nu}{64}\|\nabla\xi_{R}^{j,n+1}\|^{2}+\frac{C}{\nu}\|\frac
{u^{j,n+1}-u^{j,n}}{\Delta t}-u_{t}^j(t^{n+1})\|^{2}\\
&\leq\frac{\nu}{64}\|\nabla\xi_{R}^{j,n+1}\|^{2}+\frac{C\Delta t}{\nu}
\int_{t^{n}}^{t^{n+1}}\|u_{tt}^j\|^{2} dt \text{ .}
\end{aligned}
\end{equation}
Combining, we now have the following inequality:
\begin{equation}\label{eq:err2}
\begin{aligned}
\frac{1}{\Delta t}&\left(\frac{1}{2}\|\xi_{R}^{j,n+1}\|^{2}-\frac{1}{2}\|\xi
_{R}^{j,n}\|^{2}+\frac{1}{4}\| \xi_{R}^{j,n+1}-\xi_{R}^{j,n}\| ^{2}
\right)+\frac{\nu}{8}\left(\| \nabla\xi_{R}^{j,n+1}\| ^{2}-\| \nabla\xi_{R}
^{j,n}\| ^{2}\right)\\
&+\left(\frac{\nu}{4}-C\triangle t \|\hspace{-1pt}|  {\mathbb S}_R \|\hspace{-1pt}| _2^{1/2}\| \nabla (u_{R}^{j, n}-<u_R>^n)\| ^{2}\right)\| \nabla
\xi_{R}^{j,n+1}\| ^{2}\\
&\leq \frac{C}{\nu^{3}}\| \xi_{R}^{j,n}\| ^{2}+C\nu^{-1}\Vert \nabla u_R^{j,n}\Vert\| \nabla\eta^{j,n+1}
\| ^{2}+C\nu^{-1}\| \nabla\eta^{j,n}
\| ^{2}\\
&\quad
+\frac{C\Delta t}{\nu}\left(\int_{t^{n}}^{t^{n+1}}\| \nabla u_{t}^j
\| ^{2}dt\right)+C\nu\| \nabla\eta^{j,n+1}\| ^{2}\\
&\quad+C\nu^{-1}\| \nabla
\left(u_{R}^{j, n}-<u_R>^n\right)\| ^{2}\| \nabla\eta^{j,n+1}\| ^{2}+C\nu^{-1}\|  p^{j,n+1}-q_{h}^{n+1}
\| ^{2}\\
&\quad+C\nu^{-1}\| \nabla \left(u_{R}^{j,n}-<u_R>^n\right)\| ^{2}\| \nabla\eta^{j,n}\| 
^{2} +\frac{C\Delta t}{\nu}\int_{t^{n}
}^{t^{n+1}}\|  u_{tt}^j\| ^{2}dt\\
&\quad+\frac{C\Delta t}{\nu}\| \nabla \left(u_{R}^{j,n}-<u_R>^n\right)\| ^{2}\left(\int_{t^{n}}^{t^{n+1}
}\| \nabla u_{t}^j\| ^{2}dt\right)\text{ .}
\end{aligned}
\end{equation}
By the timestep condition \ $\frac{\nu}{4}-C\triangle t \|\hspace{-1pt}|  {\mathbb S}_R \|\hspace{-1pt}| _2^{1/2}\| \nabla
(u_{R}^{j, n}-<u_R>^n)\| ^{2}>0$. Take the sum of (\ref{eq:err2}) from $n=0$ to
$n=N-1$ and multiply through by $\Delta t$. Since $u_R^{j,0}=\sum_{i=1}^R \left(u^{j,0}, {\varphi}_i\right)\varphi_i$, we have $\|  \xi_{R}^{j,0}\| ^2=0$ and  $\|  \nabla \xi_{R}^{j,0}\| ^2=0$. 
\begin{equation}\label{ineq:err3}
\begin{aligned}
\frac{1}{2}\|&\xi_{R}^{j,N}\|^{2}+\frac{\nu\Delta t}{8}\|\nabla \xi_{R}
^{j,N}\|^{2}+\sum_{n=0}^{N-1}\frac{1}{4}\| \xi_{R}^{j,n+1}-\xi_{j,R}^{n}
\| ^{2}+C\Delta t\sum_{n=0}^{N-1}\nu\| \nabla\xi_{R}^{j,n+1}\| 
^{2}\\
&\leq \Delta t\sum_{n=0}^{N-1}\frac{C}{\nu^{3}}\| \xi_{R}
^{j,n}\| ^{2}
+\Delta t\sum_{n=0}^{N-1}\Bigg\{C\nu^{-1}\Vert \nabla u_R^{j,n}\Vert\| \nabla\eta^{j,n+1}
\| ^{2} +C\nu^{-1}\| \nabla\eta^{j,n}
\| ^{2}\\
&\quad
+\frac{C\Delta t}{\nu}\left(\int_{t^{n}}^{t^{n+1}}\| \nabla u_{t}^j
\| ^{2}dt\right)+C\nu\| \nabla\eta^{j,n+1}\| ^{2}+\frac{C\Delta t}{\nu}\int_{t^{n}
}^{t^{n+1}}\|  u_{tt}^j\| ^{2}dt\\
&\quad+C\Delta t^{-1}\|\hspace{-1pt}|  {\mathbb S}_R \|\hspace{-1pt}| _2^{-1/2}\| \nabla\eta^{j,n+1}\| ^{2}+C\nu^{-1}\Delta t^{-1}|\hspace{-1pt}|  {\mathbb S}_R \|\hspace{-1pt}| _2^{-1/2}\| \nabla\eta^{j,n}\| 
^{2} \\
&\quad+C\nu^{-1}\|  p^{j,n+1}-q_{h}^{n+1}
\| ^{2}+C\|\hspace{-1pt}|  {\mathbb S}_R \|\hspace{-1pt}| _2^{-1/2}(\int_{t^{n}}^{t^{n+1}
}\| \nabla u_{t}^j\| ^{2}dt)\Bigg\}\text{ .}
\end{aligned}
\end{equation}
Using the result \eqref{Stab:result} from the stability analysis, i.e., $\Delta t \sum_{n=0}^{N-1}\nu\Vert \nabla u_R^{j,n}\Vert^2\leq C$ and Assumption \eqref{assumption1}, we have
\begin{align}
C\nu^{-1}\Delta t&\sum_{n=0}^{N-1}\Vert \nabla u_R^{j,n}\Vert\| \nabla\eta^{j,n+1}
\| ^{2}\\
&\leq C\nu^{-2} \Bigg(\inf_{{j\in\{1,\ldots,J_S\}}}\frac{1}{N_S} \sum_{m=1}^{N_S} (\|  \nabla (u^{m}-u_S^{j,m})\|+\|\hspace{-1pt}|  {\mathbb S}_R \|\hspace{-1pt}| _2 \|   u^{m}-u_S^{j,m}\| ^2  ) ^2\nonumber\\
&
+(C+h^2 \|\hspace{-1pt}|  {\mathbb S}_R \|\hspace{-1pt}| _2 ) h^{2s} + (C+\|\hspace{-1pt}|  {\mathbb S}_R \|\hspace{-1pt}| _2 )\triangle t^4  + J_S\sum_{i=R+1}^{J_SN_S} \|  \nabla \varphi_i\| ^2\lambda_i \Bigg) \nonumber
\end{align}
Now applying Lemma \ref{lm:Projerr} gives
\begin{equation}\label{ineq:errlast}
\begin{aligned}
\frac{1}{2}\|\xi_{R}^{j,N}&\|^{2}+\sum_{n=0}^{N-1}\frac{1}{4}\| \xi
_{R}^{j,n+1}-\xi_{R}^{j,n}\| ^{2}+\frac{\nu\Delta t}{8}\|\nabla\xi
_{R}^{j,N}\|^{2}+C\Delta t\sum_{n=0}^{N-1}\nu\| \nabla\xi_{R}^{j,n+1}
\| ^{2}\\
&\leq  \Delta t\sum_{n=0}^{N-1}\frac{C}{\nu^{3}}\| \xi_{R}
^{j,n}\| ^{2}+\Bigg(C\nu^{-2}+C N_S \triangle t\nu^{-1}+CN_S\|\hspace{-1pt}|  {\mathbb S}_R \|\hspace{-1pt}| _2^{-1/2}\\
&+CN_S\nu \triangle t+C\nu^{-1}N_S\|\hspace{-1pt}|  {\mathbb S}_R \|\hspace{-1pt}| _2^{-1/2}
 \Bigg) \cdot \Bigg( \inf_{{j\in\{1,\ldots,J_S\}}}\frac{1}{N_S} \sum_{m=1}^{N_S} (\|  \nabla (u^{m}-u_S^{j,m})\| ^2\\
&+\|\hspace{-1pt}|  {\mathbb S}_R \|\hspace{-1pt}| _2 \|   u^{m}-u_S^{j,m}\| ^2  )
+(C+h^2 \|\hspace{-1pt}|  {\mathbb S}_R \|\hspace{-1pt}| _2 ) h^{2s} + (C+\|\hspace{-1pt}|  {\mathbb S}_R \|\hspace{-1pt}| _2 )\triangle t^4  \\
&+ J_S\sum_{i=R+1}^{J_SN_S} \|  \nabla \varphi_i\| ^2\lambda_i  \Bigg )
+C  \Delta t \|\hspace{-1pt}|  {\mathbb S}_R \|\hspace{-1pt}| _2^{-1/2}\| |\nabla
u_{t}^j|\| _{2,0}^{2}\\
&+C\frac{h^{2s}}{\nu}\| |p^j|\| _{2,s}^{2}
+\frac{C{\Delta t}^{2}}{\nu
}\| |u_{tt}^j|\| _{2,0}^{2}+\frac{C\Delta t^{2}}{\nu}\| |\nabla u_{t}^j|\| _{2,0}^{2}.
\end{aligned}
\end{equation}
The next step will be the application of the discrete Gronwall
inequality (Girault and Raviart \cite{GR79}, p. 176).
\begin{equation}\label{ineq:errlast1}
\begin{aligned}
\frac{1}{2}\| &\xi_{R}^{j,N}\| ^{2}+\sum_{n=0}^{N-1}\frac{1}{4}\| 
\xi_{R}^{j,n+1}-\xi_{R}^{j,n}\| ^{2}+\frac{\nu\Delta t}{8}\| \nabla
\xi_{R}^{j,N}\| ^{2}+C\Delta t\sum_{n=0}^{N-1}\nu\| \nabla\xi_{R}
^{j,n+1}\| ^{2}\\
&\leq \exp \Big(\frac{CT}{\nu^{3}}\Big)\Bigg\{\Bigg(C\nu^{-2}+C N_S \triangle t\nu^{-1}+CN_S\|\hspace{-1pt}|  {\mathbb S}_R \|\hspace{-1pt}| _2^{-1/2}\\
&+CN_S\nu \triangle t+C\nu^{-1}N_S\|\hspace{-1pt}|  {\mathbb S}_R \|\hspace{-1pt}| _2^{-1/2}
 \Bigg) \cdot \Bigg( \inf_{{j\in\{1,\ldots,J_S\}}}\frac{1}{N_S} \sum_{m=1}^{N_S} (\|  \nabla (u^{m}-u_S^{j,m})\| ^2\\
&+\|\hspace{-1pt}|  {\mathbb S}_R \|\hspace{-1pt}| _2 \|   u^{m}-u_S^{j,m}\| ^2  )
+(C+h^2 \|\hspace{-1pt}|  {\mathbb S}_R \|\hspace{-1pt}| _2 ) h^{2s} + (C+\|\hspace{-1pt}|  {\mathbb S}_R \|\hspace{-1pt}| _2 )\triangle t^4  \\
&+ J_S\sum_{i=R+1}^{J_SN_S} \|  \nabla \varphi_i\| ^2\lambda_i  \Bigg )
+C  \Delta t \|\hspace{-1pt}|  {\mathbb S}_R \|\hspace{-1pt}| _2^{-1/2}\| |\nabla
u_{t}^j|\| _{2,0}^{2}\\
&+C\frac{h^{2s}}{\nu}\| |p^j|\| _{2,s}^{2}
+\frac{C{\Delta t}^{2}}{\nu
}\| |u_{tt}^j|\| _{2,0}^{2}+\frac{C\Delta t^{2}}{\nu}\| |\nabla u_{t}^j|\| _{2,0}^{2}\Bigg\}\text{ .}
\end{aligned}
\end{equation}
Recall that $e^{j,n}=\eta^{j,n}+\xi_{R}^{j,n}$. To simplify formulas, we drop the second and third term on the left hand side of \eqref{ineq:errlast1}. Then by the triangle
inequality and Lemma \ref{lm:Projerr}, absorbing constants, we have \eqref{ineq:err00}.

}

\end{document}